\tikzset{
  ncone/.pic={
	\draw (0,0)--(0,0.2);
  }
}
\tikzset{
  nctwo/.pic={
    \draw (0,0)--(0,0.2);
	\draw (0.1,0)--(0.1,0.2);
  }
}
\tikzset{
  nctwoW/.pic={
    \draw (0,0.2)--(0,0)--(0.1,0)--(0.1,0.2);
  }
}
\tikzset{
  nctwoWW/.pic={
    \draw (0,0.2)--(0,0)--(0.2,0)--(0.2,0.2);
  }
}
\tikzset{
  ncthreeWW/.pic={
    \draw (0,0.2)--(0,0)--(0.3,0)--(0.3,0.2);
	\draw (0.2,0)--(0.2,0.2);
  }
}
\tikzset{
  ncthree/.pic={
    \draw (0,0)--(0,0.2);
	\draw (0.1,0)--(0.1,0.2);
	\draw (0.2,0)--(0.2,0.2);
  }
}
\tikzset{
  ncthreeW/.pic={
    \draw (0,0.2)--(0,0)--(0.2,0)--(0.2,0.2);
	\draw (0.1,0)--(0.1,0.2);
  }
}
\tikzset{
  ncfour/.pic={
    \draw (0,0)--(0,0.2);
	\draw (0.1,0)--(0.1,0.2);
	\draw (0.2,0)--(0.2,0.2);
	\draw (0.3,0)--(0.3,0.2);
  }
}
\tikzset{
  ncfive/.pic={
    \draw (0,0)--(0,0.2);
	\draw (0.1,0)--(0.1,0.2);
	\draw (0.2,0)--(0.2,0.2);
	\draw (0.3,0)--(0.3,0.2);
	\draw (0.4,0)--(0.4,0.2);
  }
}
\tikzset{
  ncfourW/.pic={
    \draw (0,0.2)--(0,0)--(0.3,0)--(0.3,0.2);
	\draw (0.1,0)--(0.1,0.2);
	\draw (0.2,0)--(0.2,0.2);
  }
}
\tikzset{
  ncfiveW/.pic={
    \draw (0,0.2)--(0,0)--(0.4,0)--(0.4,0.2);
	\draw (0.1,0)--(0.1,0.2);
	\draw (0.2,0)--(0.2,0.2);
	\draw (0.3,0)--(0.3,0.2);
  }
}
\tikzset{
  nconeinsidetwoWW/.pic={
    \path (0,0) pic {nctwoWW}; \path (0.1,0.1) pic {ncone};
  }
}
\tikzset{
  nconeinsidethreeWW/.pic={
    \path (0,0) pic {ncthreeWW}; \path (0.1,0.1) pic {ncone};
  }
}
\tikzset{
  nconeinsidethreerightWW/.pic={
    \draw (0,0.2)--(0,0)--(0.3,0)--(0.3,0.2);
    \draw (0.1,0)--(0.1,0.2); \draw (0.2,0.1)--(0.2,0.3);
  }
}
\tikzset{
  nconeinsidethreeleftWW/.pic={
    \draw (0,0.2)--(0,0)--(0.3,0)--(0.3,0.2);
    \draw (0.2,0)--(0.2,0.2); \draw (0.1,0.1)--(0.1,0.3);
  }
}
\tikzset{
  nctwoWWW/.pic={
    \draw (0,0.2)--(0,0)--(0.3,0)--(0.3,0.2);
  }
}
\tikzset{
  nconeoneinsidetwoWW/.pic={
	\path (0,0) pic {ncone};
    \path (0.1,0) pic {nctwoWW}; 
	\path (0.2,0.1) pic {ncone};
  }
}
\newtheorem{theorem}{Theorem}[section]
\newtheorem{lemma}[theorem]{Lemma}
\newtheorem{proposition}[theorem]{Proposition}
\newtheorem{definition}{Definition}
\newtheorem{example}[theorem]{Example}
\newtheorem{remark}{Remark}
\DeclareMathOperator{\graft}{\triangleright}
\title{Translations of rough paths in combinatorial Hopf algebras}
\author{Ludwig Rahm\footnote{Department of Mathematical Sciences, Norwegian University of Science and Technology (NTNU), 7491 Trondheim, Norway. \texttt{ludwig.rahm@ntnu.no}.}}
\begin{document}

\maketitle

\begin{abstract}
We generalize Bruned et.~al.'s notion of translation in geometric and branched rough paths to a notion of translation in rough paths over any combinatorial Hopf algebra. We show that this notion of translation is equivalent to two bialgebras being in cointeraction, subject to certain additional conditions. We argue that reformulating translations in terms of substitutions, provides simpler conditions for the cointeraction formulation. For the special case where the translation can be obtained from a product, we show how to obtain a description of the dual coaction. As a concrete example, we describe translations in planarly branched rough paths.
\end{abstract}

\section{Introduction}
\label{sec:intro}

The notion of rough path grew out of Lyons' 1998 work on differential equations \cite{Lyons1998} 
\allowdisplaybreaks
\begin{align} 
\label{eq::Diffeq}
	dY_{st}=\sum_{i=1}^df_i(Y_{st})dX_t^i,
\end{align}
where $Y_{0s}: \mathbb{R} \to \mathbb{R}^d$ is an unknown path, the $f_i:\mathbb{R}^d \to \mathbb{R}^d$ are vector fields and $X_s: \mathbb{R} \to \mathbb{R}^d$ is some driving path. If the latter is sufficiently smooth, then it makes sense to integrate against it iteratively. One obtains a solution in terms of a sum of iterated integrals:
\allowdisplaybreaks
\begin{align*}
	Y_{st}=\sum_{n=1}^{\infty} \sum_{1 \leq i_1 \leq \cdots \leq i_n\leq n} \,
	\big(\idotsint\limits_{s\leq t_1\leq \dots\leq t_n\leq t} 
	f_{i_n}\cdots f_{i_1}dX_{t_1}^{i_1}\cdots dX_{t_n}^{i_n}\big).
\end{align*}
The theory of rough paths is based on the idea that if the path $X_t$ is not sufficiently smooth, then Equation \eqref{eq::Diffeq} requires some additional information to understand it in terms of its iterated integrals \cite{Gubinelli2003}. See, e.g., \cite{HairerKelly2012} for details. A rough path $\mathbb{X}_t$ over the path $X_t$ is the original path $X_t$ together with its --abstractly defined-- iterated integrals. This then defines the notion of a differential equation controlled by the rough path $\mathbb{X}_t$:
\begin{align}
\label{eq::ControlledDiffeq}
	dY_{st}=\sum_{i=1}^df_i(Y_{st})d\mathbb{X}_t^i.
\end{align}
In Lyon's work, a rough path $\mathbb{X}_t$ over $X_t$ is encoded as a path in the character group $(G,\otimes)$ of the shuffle Hopf algebra over $\mathbb{R}^d$ such that $ \mathbb{X}_{st}:= \mathbb{X}_{t} \otimes  \mathbb{X}^{-1}_{s}$ yielding 
\begin{align*}
	\langle \mathbb{X}_{st}, e_i \rangle = X_t^i-X_s^i.
\end{align*}
Here $e_i$ is the basis vector for the $i:th$ coordinate and the inverse $\mathbb{X}^{-1}_{s}=\mathbb{X}_{s} \circ S$, where $S$ is the antipode in the shuffle Hopf algebra. The evaluation $\langle \mathbb{X}_{st},e_{i_1}\cdots e_{i_n}\rangle$ plays the role of the --formally defined-- iterated integral
\begin{align*}
	\langle \mathbb{X}_{st},e_{i_1}\cdots e_{i_n}\rangle=
	\idotsint\limits_{s\leq t_1\leq \dots\leq t_n\leq t} dX_{t_1}^{i_1}\cdots dX_{t_n}^{i_n}.
\end{align*}
The righthand side has to be understood formally. Rough paths over the shuffle Hopf algebra are nowadays denoted (weak) \textit{geometric} rough paths. The concept of rough paths was extended by Gubinelli in \cite{Gubinelli2010} to encode more general integrals. He defined the notion of \textit{branched} rough path as paths in the character group over the Butcher--Connes--Kreimer Hopf algebra of non-planar rooted trees. This idea was then developed further by Curry et.~al.~in \cite{CurryEbrahimiFardManchonMuntheKaas2018}, where the concept of a rough path over any combinatorial Hopf algebra was defined. Inspired by Lie group integration theory \cite{IserlesMunthe-KaasNorsettZanna2000} and the notion of Lie--Butcher series \cite{LundervoldMunthe-Kaas2011} they furthermore showed how rough paths over the Munthe-Kaas--Wright Hopf algebra of planar rooted trees can be use to define solutions to rough differential equations on homogeneous spaces.\\

Rough path theory was generalized to the theory of regularity structures by Hairer in \cite{Hairer2013}. From this generalization follows a precise correspondence between notions from rough paths theory and from regularity structures. This correspondence was examined by Bruned et.~al.~in \cite{BrunedChevyrevFrizPreiss2017}, where they asked the question, whether renormalization of so-called models in regularity structures, studied comprehensively in \cite{BHZ2019}, has a corresponding analogue for rough paths. They showed that the answer is affirmative and established a correspondence between renormalization and translations of geometric as well as branched rough path.\\

The paper at hand aims at extending the notion of translation of geometric and branched rough paths, to translations of rough paths over any combinatorial Hopf algebra. Based on the properties of both geometric and branched rough paths translations, we propose a definition for translations of rough paths with respect to any combinatorial Hopf algebra. We then show that this definition can be understood via the dual map,  as two Hopf algebras in cointeraction that satisfy certain extra conditions. We furthermore show that translations can equivalently be understood as substitutions, better known in the context of Butcher's $B$-series, where the dual formulation in terms of cointeracting Hopf algebras is subject to simpler conditions. Translations in planarly branched rough paths are constructed and we describe how these translations affect the solution to rough differential equations on homogeneous spaces driven by a planarly branched rough path.

\medskip

The structure of the paper is as follows: In section \ref{section::Preliminaries}, we summarize the definitions and results that the present paper builds upon. In section \ref{section::Translations}, we define translations for rough paths over a combinatorial Hopf algebra and characterize the dual map as a cointeraction between two Hopf algebras. In section \ref{section::Substitution}, we show that translations can equivalently be thought of as substitutions. The substitution formulation provides a simpler way to describe the dual map. We give a formula to convert between the dual map for substitution and the dual map for translation. In section \ref{section::Algebraic}, we show how extra algebraic structure on the Hopf algebra that we take rough paths over can give us an explicit translation map, whose dual map can be described with coloured operads. In section \ref{section::Examples}, we apply the construction from the previous section to describe translations in planarly branched rough paths.

\section{Preliminaries} 
\label{section::Preliminaries}

We recall some definitions and results thereby fixing notations \cite{CartierPatras2021,ManchonHopf,Radford2012}. All algebraic structures are assumed to be defined over some fixed field $\mathbb{K}$ of characteristic zero.

\subsection{Combinatorial Hopf algebras}
\label{ssec:CHA}

A bialgebra $(V,\odot,\Delta,\eta,\epsilon)$ over the field $\mathbb{K}$ is a $\mathbb{K}$ vector space $V$ together with an associative multiplication, $\odot : V \otimes V \to V$, i.e., $x \odot (y \odot z)= (x \odot y) \odot z $, a coassociative coproduct, $\Delta: V \to V \otimes V$, i.e., $(Id \otimes \Delta)\Delta=(\Delta \otimes Id)\Delta$, a unit map, $\eta: \mathbb{K} \to V$, i.e., $\eta(1)\odot x=x$, and the counit, $\epsilon: V \to \mathbb{K}$, characterized by $(Id \otimes \epsilon)\Delta=Id=(\epsilon \otimes Id)\Delta$, satisfying the bialgebra relations:
\allowdisplaybreaks
\begin{align*}
	\Delta(x \odot y)=&\Delta(x) \odot \Delta(y),\\
	\epsilon(x)\epsilon(y)=&\epsilon(x \odot y),\\
	\Delta(\eta(x))=&(\eta \otimes \eta)\Delta_\mathbb{K} (x),\\
	Id_{\mathbb{K}} =& \epsilon \circ \eta.
\end{align*}
A graded bialgebra is called connected if the unit map, $\eta$, is an isomorphism between $\mathbb{K}$ and the set of degree zero elements. A Hopf algebra is defined as a bialgebra equipped with an anti-homomorphism $S: V \to V$ called the antipode satisfying
\allowdisplaybreaks
\begin{align*}
	\odot \circ (S \otimes Id)\Delta=\eta \circ \epsilon = \odot \circ ( Id \otimes S)\Delta.
\end{align*}
It is well-known that a connected and graded bialgebra is a Hopf algebra \cite{ManchonHopf}. 

\begin{definition}
Let $(\mathcal{H},\odot,\Delta,\eta,\epsilon)$ be a Hopf algebra. We say that the element $x \in \mathcal{H}$ is primitive if $\Delta(x)=1 \otimes x + x \otimes 1$, or grouplike if $\Delta(x)=x \otimes x$.
\end{definition}

\begin{definition} 
\label{def::cointeraction}
We say that two bialgebras $(A,\odot_{\scriptscriptstyle{A}},\Delta_{\scriptscriptstyle{A}},\epsilon_{\scriptscriptstyle{A}},\eta_{\scriptscriptstyle{A}})$, $(B,\odot_{\scriptscriptstyle{B}},\Delta_{\scriptscriptstyle{B}},\epsilon_{\scriptscriptstyle{B}},\eta_{\scriptscriptstyle{B}})$ are in cointeraction if $B$ is coacting on $A$ via a map $\rho: A \to B \otimes A$ that satisfies:
\allowdisplaybreaks
\begin{align*}
	\rho(1_A)=&1_B \otimes 1_A, \\
	\rho(x \odot_{\scriptscriptstyle{A}} y)=
	&\rho(x) (\odot_{\scriptscriptstyle{B}} \otimes \odot_{\scriptscriptstyle{A}}) \rho(y), \\
	(Id \otimes \epsilon_{\scriptscriptstyle{A}})\rho=&1_B \epsilon_{\scriptscriptstyle{A}}, \\
	(Id \otimes \Delta_{\scriptscriptstyle{A}})\rho=&m_{\scriptscriptstyle{B}}^{1,3}(\rho \otimes \rho)\Delta_{\scriptscriptstyle{A}},
\end{align*}
where 
\allowdisplaybreaks
\begin{align*}
	m_{\scriptscriptstyle{B}}^{1,3}(a \otimes b \otimes c \otimes d)
	=a \odot_{\scriptscriptstyle{B}}c \otimes b \otimes d.
\end{align*}
\end{definition}

For our purposes in this paper, we need the definition of a combinatorial Hopf algebra given by Curry et.~al.~in \cite{CurryEbrahimiFardManchonMuntheKaas2018}.

\begin{definition}
\label{def:combHopfAlg}
A combinatorial Hopf algebra $(V,\odot,\Delta,\eta,\epsilon)$ is a graded connected Hopf algebra $V = \oplus_{n=0}^{\infty}V_n$ over a field $\mathbb{K}$ of characteristic zero, together with a basis $\mathcal{B}=\cup_{n \geq 0} \mathcal{B}_n$ of homogeneous elements, such that:
\begin{enumerate}
	\item There exists two positive constants $B$ and $C$ such that the dimension of $V_n$ is bounded by $BC^n$.
	\item The structure constants $c_{x y}^{z}$ and $c_{z}^{x y}$ of the product respectively the coproduct, defined for all elements $x,y,z \in \mathcal{B}$ by
\allowdisplaybreaks
\begin{align*}
	x \odot y =& \sum_{z \in \mathcal{B}} c_{x y}^{z} z, \\
	\Delta(z) =& \sum_{x,y \in \mathcal{B}} c_{z}^{x y} x \otimes y,
\end{align*}
are non-negative integers.
\end{enumerate}
We furthermore say that $V$ is non-degenerate if $\mathcal{B} \cap \text{Prim}(V)=\mathcal{B}_1$.
\end{definition}

\begin{definition} \label{def::RoughPath}
Let $\mathcal{H}=\oplus_{n\geq 0} \mathcal{H}_n$ be a commutative graded Hopf algebra with unit $1$, and let $\gamma \in (0,1]$. Suppose that $\mathcal{H}$ is endowed with a basis $\mathcal{B}$ making it combinatorial and non-degenerate in the sense of Definition \ref{def:combHopfAlg}. A $\gamma$-regular $\mathcal{H}$-rough path is a two-parameter family $\mathbb{X}=(\mathbb{X}_{st})_{s,t \in \mathbb{R}}$ of linear forms on $\mathcal{H}$ such that $\langle \mathbb{X}_{st},1\rangle =1$ and: 
\begin{enumerate}
	\item For any $s,t \in \mathbb{R}$ and any $x,y \in \mathcal{H}$, the following identity holds
\allowdisplaybreaks
\begin{align*}
	\langle \mathbb{X}_{st},x \odot y \rangle 
	= \langle \mathbb{X}_{st},x \rangle\langle \mathbb{X}_{st}, y \rangle.
\end{align*}
	\item For any $s,t,u \in \mathbb{R}$, Chen's lemma holds
\allowdisplaybreaks
\begin{align*}
	\mathbb{X}_{su} \ast \mathbb{X}_{ut}=\mathbb{X}_{st},
\end{align*}
where $\ast$ is the convolution product for linear forms on $\mathcal{H}$, defined in terms of the coproduct on $\mathcal{H}$.
	\item For any $n \geq 0$ and any $x \in \mathcal{B}_n$, we have estimates
\allowdisplaybreaks
\begin{align*}
	\sup_{s \neq t} \frac{|\langle \mathbb{X}_{st},x \rangle |}{|t-s|^{\gamma |x|}} < \infty,
\end{align*}
where $|x|=n$ denotes the degree of the element $x \in \mathcal{B}_n$.
\end{enumerate}
\end{definition}

In any combinatorial Hopf algebra $\mathcal{H}$, we define the inverse-factorial character $q: \mathcal{H} \to \mathbb{R}$ by:
\allowdisplaybreaks
\begin{align*}
	q(x)=& 1, \quad x \in \mathcal{B}_1, \\
	q(y)=&\frac{1}{2^{|y|}-2}q(y_{(1)'})q(y_{(2)'}),
\end{align*}
where we use the Sweedler notation $\Delta'(y)= y_{(1)'} \otimes y_{(2)'}$ for the reduced coproduct $\Delta'(y)=\Delta(y)-1 \otimes y - y \otimes 1$ of $\mathcal{H}$. For $y \in \mathcal{B}$ we shall also use the notation
\allowdisplaybreaks
\begin{align*}
	y ! = \frac{1}{q(y)}.
\end{align*}

\begin{theorem} [\cite{CurryEbrahimiFardManchonMuntheKaas2018}] \label{Thm::estimate}
Let $\gamma \in (0,1]$, and let $N=\frac{1}{\gamma}$. Let $\mathbb{X}$ be a $\gamma$-regular $\mathcal{H}$-rough path. Then there exists a positive constant $c$ such that:
\allowdisplaybreaks\begin{align*}
|\langle \mathbb{X}_{st}, x \rangle| \leq c^{|x|}q_{\gamma}(x)|t-s|^{\gamma |x|},
\end{align*}
for any $x \in \mathcal{B}$, where
\allowdisplaybreaks\begin{align*}
q_{\gamma}(x)=& \begin{cases*}
q(x), \quad |x| \leq N \\
\frac{1}{2^{\gamma |x|}-2}q_{\gamma}(x')q_{\gamma}(x''), \quad |x| > N
\end{cases*}.
\end{align*}
\end{theorem}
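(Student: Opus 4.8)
The plan is to prove the bound by induction on the degree $n=|x|$. The case $|x|=0$ is immediate since $\langle\mathbb{X}_{st},1\rangle=1$, and for $1\le|x|\le N$ there are only finitely many basis elements to handle, because each homogeneous component $\mathcal{H}_n$ is finite dimensional by the combinatorial assumption. For each such $x$, axiom (3) of Definition \ref{def::RoughPath} provides a finite constant $K_x:=\sup_{s\ne t}|\langle\mathbb{X}_{st},x\rangle|/|t-s|^{\gamma|x|}$, and since $q_\gamma(x)=q(x)>0$ on this finite set one can fix, once and for all, a constant $c\ge 1$ large enough that $K_x\le c^{|x|}q(x)$ whenever $|x|\le N$. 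The crux of what follows is that this single choice of $c$ is never enlarged in the inductive step. (It is also worth recording that $q_\gamma$ is well defined: for $|x|>N$ one has $\gamma|x|>1$, hence $2^{\gamma|x|}-2>0$, and the reduced coproduct of $x$ involves only basis elements of strictly smaller degree, so the recursion terminates.)

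For the inductive step, fix $x\in\mathcal{B}$ with $|x|>N$, assume the estimate for all basis elements of degree $<|x|$, and fix $s<t$ (the other case being symmetric). Using Chen's lemma $\mathbb{X}_{su}\ast\mathbb{X}_{ut}=\mathbb{X}_{st}$ together with $\langle\mathbb{X}_{\cdot\cdot},1\rangle=1$, one obtains the almost-additivity relation
\[
\langle\mathbb{X}_{st},x\rangle-\langle\mathbb{X}_{su},x\rangle-\langle\mathbb{X}_{ut},x\rangle
=\sum\langle\mathbb{X}_{su},x_{(1)'}\rangle\langle\mathbb{X}_{ut},x_{(2)'}\rangle=:D_{s,u,t},
\]
where the sum runs over the reduced coproduct $\Delta'(x)=x_{(1)'}\otimes x_{(2)'}$. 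Every term there is a product of two basis elements with degrees $\ge 1$ that sum to $|x|$, so applying the induction hypothesis with $u$ the midpoint of $[s,t]$, and using the defining relation $\sum q_\gamma(x_{(1)'})q_\gamma(x_{(2)'})=(2^{\gamma|x|}-2)q_\gamma(x)$, gives
\[
|D_{s,(s+t)/2,t}|\ \le\ c^{|x|}q_\gamma(x)\,(2^{\gamma|x|}-2)\,2^{-\gamma|x|}\,|t-s|^{\gamma|x|}.
\]

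Next I would run the standard dyadic telescoping. For $n\ge 0$ let $\mu^n$ be the sum of $\langle\mathbb{X}_{\cdot\cdot},x\rangle$ over the $2^n$ consecutive intervals of the uniform partition of $[s,t]$, so that $\mu^0=\langle\mathbb{X}_{st},x\rangle$ and $\mu^n-\mu^{n+1}$ is a sum of $2^n$ midpoint defects of the type just bounded, whence
\[
|\mu^n-\mu^{n+1}|\ \le\ c^{|x|}q_\gamma(x)\,(2^{\gamma|x|}-2)\,2^{-\gamma|x|}\,|t-s|^{\gamma|x|}\,2^{n(1-\gamma|x|)}.
\]
Since $|x|>N$ forces $\gamma|x|>1$, the series $\sum_{n\ge 0}2^{n(1-\gamma|x|)}$ converges to $(1-2^{1-\gamma|x|})^{-1}$; moreover axiom (3) gives $|\mu^n|\le K_x|t-s|^{\gamma|x|}2^{n(1-\gamma|x|)}\to 0$, so $\langle\mathbb{X}_{st},x\rangle=\sum_{n\ge 0}(\mu^n-\mu^{n+1})$. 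Summing the estimates and using the identity $(2^{\gamma|x|}-2)2^{-\gamma|x|}(1-2^{1-\gamma|x|})^{-1}=1$ leaves exactly $|\langle\mathbb{X}_{st},x\rangle|\le c^{|x|}q_\gamma(x)|t-s|^{\gamma|x|}$, closing the induction. The only genuinely delicate points are the bookkeeping of the constant $c$ — one must check that the single choice made in the base case survives every inductive step, which works precisely because the recursion defining $q_\gamma$ is arranged so that the telescoped geometric series has total mass one — and the appeal to axiom (3) guaranteeing $\mu^n\to0$, which is what identifies the sewn quantity with $\langle\mathbb{X}_{st},x\rangle$ rather than some a priori unrelated limit. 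Everything else is routine; alternatively the inductive step could be packaged as an instance of a sewing / Lyons-extension lemma, but since $q_\gamma$ must be tracked explicitly the hands-on dyadic argument is cleaner.
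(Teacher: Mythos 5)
The paper itself gives no proof of Theorem \ref{Thm::estimate}: it is quoted verbatim from \cite{CurryEbrahimiFardManchonMuntheKaas2018}, so there is no internal argument to compare yours against. Judged on its own, your proof is correct and complete, and it is the expected Lyons-type argument: handle $|x|\le N$ by finiteness of $\bigcup_{n\le N}\mathcal{B}_n$ together with $q>0$ to fix $c$ once and for all, then for $|x|>N$ combine Chen's identity (which isolates the defect $D_{s,u,t}$ as the reduced-coproduct term), the induction hypothesis at the midpoint, and the dyadic telescoping; the algebraic identity $(2^{\gamma|x|}-2)\,2^{-\gamma|x|}\,(1-2^{1-\gamma|x|})^{-1}=1$ is exactly what makes the geometric series have total mass one, so the constant $c$ is indeed never enlarged and the induction closes. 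Your verification that $q_\gamma$ is well defined for $\gamma|x|>1$ and your use of axiom (3) to show $\mu^n\to 0$ (hence to identify the telescoped sum with $\langle\mathbb{X}_{st},x\rangle$) are both sound; note that the latter is available precisely because Definition \ref{def::RoughPath} postulates the H\"older-type bound in every degree, whereas in extension-theorem settings where it is only assumed for $|x|\le N$ this step would have to be replaced by a construction of the higher levels.

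Two small points deserve to be made explicit. First, when you pass from the triangle inequality over the reduced coproduct to the recursion $\sum q_\gamma(x_{(1)'})q_\gamma(x_{(2)'})=(2^{\gamma|x|}-2)q_\gamma(x)$, you are tacitly using that the coproduct structure constants $c_x^{x'x''}$ are non-negative, since the triangle inequality produces $\sum |c_x^{x'x''}|\,q_\gamma(x')q_\gamma(x'')$; this is exactly where the ``combinatorial'' hypothesis of Definition \ref{def:combHopfAlg} enters and should be cited. Second, the positivity $q(x)>0$ for $2\le|x|\le N$, which you need when choosing $c$ in the base range, follows from non-degeneracy (the reduced coproduct of a non-primitive basis element is non-zero) together with the same non-negativity of structure constants; it is worth a sentence rather than an assertion. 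With these remarks added, the argument stands as a self-contained proof of the cited estimate.
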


\subsection{Trees and forests}
\label{ssec:trees}

A rooted tree is a connected graph without cycles, together with a distinguished vertex called the root. We say that the rooted tree is non-planar if it is not endowed with a preferred embedding into the plane. It is called planar if it is endowed with such an embedding into the plane. We will draw rooted trees with the root at the top. The two trees
\allowdisplaybreaks
\begin{align*}
	\Forest{[[][[]]]} \quad \text{and} \quad  \Forest{[[[]][]]}
\end{align*}
are isomorphic as graphs via an isomorphism that sends the root to the root, hence they are equal as non-planar trees. However, considering embeddings into the plane makes them different, hence they are not equal as planar rooted trees. An unordered sequence of non-planar rooted trees is called a non-planar forest. An ordered sequence of planar rooted trees is called an ordered forest. We say that a rooted tree/forest is decorated by the set $\mathcal{C}$ if there is a map from the vertices of the rooted tree/forest to the set $\mathcal{C}$. A decoration of a vertex will be drawn by writing the decoration next to the vertex. We will denote by $\mathcal{T}_{\mathcal{C}}$ the vector space of non-planar rooted trees decorated by $\mathcal{C}$. The vector space of decorated planar rooted trees is denoted by $\mathcal{PT}_{\mathcal{C}}$. The vector space of decorated non-planar and ordered forests is denoted by $\mathcal{F}_{\mathcal{C}}$ respectively $\mathcal{OF}_{\mathcal{C}}$. \\

Non-planar rooted trees can be endowed with the grafting product $\curvearrowright: \mathcal{T}_{\mathcal{C}} \otimes \mathcal{T}_{\mathcal{C}} \to \mathcal{T}_{\mathcal{C}}$ given by defining $\tau_1 \curvearrowright \tau_2$ to be the sum of all rooted trees obtained by adding one edge from some vertex of $\tau_2$ to the root of $\tau_1$. The root of each of the rooted trees in the sum $\tau_1 \curvearrowright \tau_2$ is the root of $\tau_2$. The algebra $(\mathcal{T}_{\mathcal{C}},\curvearrowright)$ is the free pre-Lie algebra \cite{Burde2006,Cartier11,ChapotonLivernet2001,Livernet2006,Manchon2009,OudomGuin2008}. Pre-Lie algebras are defined by the (left) pre-Lie relation
\allowdisplaybreaks
\begin{align*}
	x \curvearrowright (y \curvearrowright z) - (x \curvearrowright y) \curvearrowright z 
	- y \curvearrowright (x \curvearrowright z) + (y \curvearrowright x) \curvearrowright z =0.
\end{align*}
The property of being a free pre-Lie algebra means that for any other pre-Lie algebra $(A,\diamond)$ and a map $\phi: \mathcal{C} \to A$, there exists a unique pre-Lie algebra morphism $\xi: \mathcal{T}_{\mathcal{C}} \to A$ such that $\xi(\bullet_{c})=\phi(c)$ for all $c \in \mathcal{C}$. \\

Non-planar forests can be endowed with a combinatorial Hopf algebra structure $\mathcal{H}_{BCK}^{\mathcal{C}}=(\mathcal{F}_{\mathcal{C}},\odot,\Delta_{BCK})$ called the Butcher--Connes--Kreimer Hopf algebra \cite{ConnesKreimer1998}. The commutative product $\odot$ is given by the disjoint union of two unordered sequences of rooted trees. The coproduct $\Delta_{BCK}$ is defined by so-called admissible edge cuts. Let $\tau \in \mathcal{T}_{\mathcal{C}} $ be a non-planar tree and let $c$ be a (possibly empty) subset of edges in $\tau$. We say that $c$ is an admissible edge cut if it contains at most one edge from each path in $\tau$ that starts in the root and ends in a leaf. Removing the edges in $c$ from $\tau$ produces several connected components, the connected component containing the root of $\tau$ will be denoted by $\mathrm{R}^c(\tau)$. The product of the remaining connected components will be denoted by $\mathrm{P}^c(\tau)$. The coproduct is then given by
\allowdisplaybreaks
\allowdisplaybreaks
\begin{align*}
	\Delta_{BCK}(\tau)=\sum_{c \text{ admissible cut}} \mathrm{P}^c(\tau) \otimes \mathrm{R}^c(\tau)
	+\tau \otimes 1
\end{align*}
on non-planar rooted trees, and extended to forests by
\allowdisplaybreaks
\allowdisplaybreaks
\begin{align*}
	\Delta_{BCK}(\tau_1\odot \cdots\odot \tau_n)=\Delta_{BCK}(\tau_1)\odot \dots \odot \Delta_{BCK}(\tau_n).
\end{align*}
A rough path in $\mathcal{H}_{BCK}$ is called a branched rough path \cite{Gubinelli2010}.\\

The undecorated non-planar forests can be endowed with another structure of bialgebra denoted $\mathcal{H}_{CEFM}=(\mathcal{F},\odot,\Delta_{CEFM})$ \cite{CalaqueEbrahimi-FardManchon2011}. The product $\odot$ is the same as for $\mathcal{H}_{BCK}$. The coproduct on the other hand $\Delta_{CEFM}$ is defined by contractions of rooted subtrees. Let $\tau \in \mathcal{T}$ be a non-planar rooted tree and let $(\tau_1,\ldots,\tau_n)$ be a spanning subforest of $\tau$, i.e., each $\tau_i$ is a rooted subtree of $\tau$ and each vertex of $\tau$ is contained in exactly one $\tau_i$. We denote by $\tau/(\tau_1,\ldots, \tau_n)$ the tree obtained by contracting each subtree to a single vertex. The coproduct, $\Delta_{CEFM}$, is then given by
\allowdisplaybreaks
\begin{align*}
	\Delta_{CEFM}(\tau)
	=\sum_{(\tau_1,\ldots,\tau_n) \atop \text{ spanning}\ \text{subforest}} 
	\tau_1 \odot \cdots \odot \tau_n \otimes \tau/(\tau_1, \ldots, \tau_n)
\end{align*}
and extended to forests multiplicatively
\allowdisplaybreaks
\begin{align*}
	\Delta_{CEFM}(\tau_1\odot \cdots \odot \tau_n)
	=\Delta_{CEFM}(\tau_1)\odot \cdots\odot \Delta_{CEFM}(\tau_n).
\end{align*}
Planar rooted trees can be endowed with the grafting product $\graft: \mathcal{PT}_{\mathcal{C}} \otimes \mathcal{PT}_{\mathcal{C}} \to \mathcal{PT}_{\mathcal{C}}$ given by defining $\tau_1 \graft \tau_2$ to be the sum of all rooted trees obtained by adding one edge from some vertex of $\tau_2$ to the root of $\tau_1$, such that the added edge is leftmost on the vertex in $\tau_2$ relative to the planar embedding. The root of each of the rooted trees in the sum $\tau_1 \graft \tau_2$ is the root of $\tau_2$. Let $(Lie(\mathcal{PT}_{\mathcal{C}}),[\cdot,\cdot])$ denote the free Lie algebra generated by $\mathcal{PT}_{\mathcal{C}}$ and extend $\graft: Lie(\mathcal{PT}_{\mathcal{C}}) \otimes Lie(\mathcal{PT}_{\mathcal{C}}) \to Lie(\mathcal{PT}_{\mathcal{C}})$ by the relations
\allowdisplaybreaks
\begin{align}
\begin{aligned} 
\label{postLierel}
	\omega_1 \graft [\omega_2,\omega_3]=
	&[\omega_1 \graft \omega_2,\omega_3] + [\omega_2,\omega_1 \graft \omega_3], \\
	[\omega_1,\omega_2] \graft \omega_3 =
	& \omega_1 \graft (\omega_2 \graft \omega_3)
	-(\omega_1 \graft \omega_2)\graft \omega_3 
	- \omega_2 \graft (\omega_1 \graft \omega_3 ) 
	+ (\omega_2 \graft \omega_1) \graft \omega_3.
\end{aligned}
\end{align}
Then $(Lie(\mathcal{PT}_{\mathcal{C}}),\graft,[\cdot,\cdot])$ is the free post-Lie algebra \cite{CurryEbrahimi-FardMunthe-Kaas2017,Ebrahimi-FardLundervoldMunthe-Kaas2014,LundervoldMunthe-Kaas2011,Munthe-KaasLundervold2012,Silva2018}. Post-Lie algebras are defined by the two relations \eqref{postLierel} above.\\

Ordered forests can be endowed with a combinatorial Hopf algebra structure $\mathcal{H}_{MKW}^{\mathcal{C}}=(\mathcal{OF}_{\mathcal{C}},\shuffle,\Delta_{MKW})$ known as the Munthe-Kaas--Wright Hopf algebra \cite{Munthe-KaasWright2008}. The commutative product $\shuffle$ is given by the sum of all ways to merge two ordered sequences of rooted trees into one sequence, so that the order from the two original sequences is preserved. The coproduct, $\Delta_{MKW}$, is defined by admissible left edge cuts. Let $\tau \in \mathcal{PT}_{\mathcal{C}}$ be a planar rooted tree and let $c$ be a (possibly empty) subset of edges in $\tau$. We say that $c$ is an admissible left edge cut if it contains at most one edge from each path in $\tau$ from the root to a leaf. Furthermore if $e$ is an edge in $c$, then every edge outgoing from the same vertex as $e$ and that is to the left of $e$ in the planar embedding, is also in $c$. Removing the edges in $c$ from $\tau$ produces several connected components, the one containing the root of $\tau$ will be denoted by $\mathrm{R}^c(\tau)$. Connected components that are cut off from the same vertex will be concatenated to an ordered forest respecting the order, and then the resulting ordered forests will be shuffled together, which is denoted by $\mathrm{P}^c(\tau)$. The coproduct $\Delta_{MKW}$ is defined by
\allowdisplaybreaks
\begin{align*}
	\Delta_{MKW}(\tau)=\sum_{c \text{ left admissible cut}} 
	\mathrm{P}^c(\tau) \otimes \mathrm{R}^c(\tau) + \tau \otimes 1
\end{align*}
on planar rooted trees. It is extended to ordered forests by
\allowdisplaybreaks\begin{align*}
	\Delta_{MKW}(\omega)=(Id \otimes B^-)\Delta_{MKW}(B^+(\omega)),
\end{align*}
where $B^+:\mathcal{OF}_{\mathcal{C}} \to \mathcal{PT}_{\mathcal{C}}$ is given by grafting all trees in the input sequence onto the same root in such a way that the planar embedding represents the order of the sequence and $B^-: \mathcal{PT}_{\mathcal{C}} \to \mathcal{OF}_{\mathcal{C}}$ is the inverse map. A rough path in $\mathcal{H}_{MKW}$ is called a planarly branched rough path  \cite{CurryEbrahimiFardManchonMuntheKaas2018}. The product dual to $\Delta_{MKW}$ is called the planar Grossman--Larson product, given by
\allowdisplaybreaks
\begin{align} \label{eq::PlanarGL}
	\omega_1 \ast \omega_2 = (\omega_1)_{(1)}((\omega_1)_{(2)}\graft \omega_2  ),
\end{align}
where $\Delta_{\shuffle}(\omega)=\omega_{(1)}\otimes \omega_{(2)}$ is the Sweedler notation for the unshuffle coproduct on words, and the planar grafting is extended to forests by
\allowdisplaybreaks
\begin{align*}
	\tau \graft \tau_1\cdots \tau_n =
	&(\tau \graft \tau_1)\tau_2 \cdots \tau_n 
		+ \tau_1(\tau \graft \tau_2)\tau_3 \cdots \tau_n 
		+ \dots + \tau_1 \cdots \tau_{n-1}(\tau \graft \tau_n),\\
	(\omega \tau) \graft \omega'=
	& \omega \graft (\tau \graft \omega')- (\omega \graft \tau) \graft \omega',
\end{align*}
for $\omega,\omega'$ ordered forests and $\tau,\tau_1,\dots,\tau_n$ planar rooted trees.\\

Ordered forests, together with planar grafting extended to forests and non-commutative associative concatenation, form the free $D$-algebra \cite{LundervoldMunthe-Kaas2011,Munthe-KaasLundervold2012,Munthe-KaasWright2008}. A unital associative algebra $(A,\cdot)$ with a non-associative product $\graft$, is a $D$-algebra if
\begin{align*}
1 \graft a =&\ a, \\
a \graft x \in&\ \mathcal{D}(A), \\
x \graft (a \graft b) =&\ (x \cdot a)\graft b + (x \graft a) \graft b,
\end{align*}
for $a,b \in A$ and $x \in \mathcal{D}(A)$, where 
$$
	\mathcal{D}(A)=\{x \in A: x \graft (a \cdot b)= (x \graft a)\cdot b + a \cdot (x \graft b), \; \forall a,b\in A \}
$$
denotes the set of derivations in $A$.

\subsection{Rough differential equations on a homogeneous space}
\label{ssec:RDEhspace}

We recall the notion of rough differential equations on homogeneous spaces together with the solutions, as described in \cite{CurryEbrahimiFardManchonMuntheKaas2018}. Let $X_t : \mathbb{R} \to \mathbb{R}^d$ be a $\gamma$-Hölder continuous path. We are interested in the equation
\begin{align} 
\label{eq::PlanarRoughDiffEq}
	dY_{st}=\sum_{i=1}^d \#f_i(Y_{st})dX_t^i,
\end{align}
with initial condition $Y_{ss}=y$. The unknown is a path $Y_s : \mathbb{R} \to \mathcal{M}$, that maps $t$ to $Y_{st}$, where the homogeneous space $\mathcal{M}$ is a manifold together with a transitive action by a Lie group $G$:
\begin{align*}
(G \times \mathcal{M}) \ni (g,e) \mapsto g.e \in \mathcal{M}.
\end{align*}
The elements $f_i : \mathcal{M} \mapsto Lie(G)$, $i=1,\ldots,d$, are smooth maps into the Lie algebra of $G$. The map $\# : C^{\infty}(\mathcal{M},Lie(G)) \mapsto C^{\infty}(\mathcal{M},T\mathcal{M})$ is given by
\begin{align*}
\#g(y)=\frac{d}{dt}_{|_{t=0}}\exp(tg(y)).y \in T_y\mathcal{M},
\end{align*}
and defines the vector fields $\#f_i$. \\

Let $\mathcal{U}(Lie(G))$ denote the universal enveloping algebra of the Lie algebra $Lie(G)$. Then $C^{\infty}(\mathcal{M},\mathcal{U}(Lie(G)))$ together with the pointwise associative product in $\mathcal{U}(Lie(G))$ and the product $\graft$ given by
\begin{align*}
	f \graft g = \frac{d}{dt}_{|_{t=0}}g(\exp(tf(x)).x)
\end{align*}
is a $D$-algebra. Let $f=(f_1,\dots,f_d)$ be a list of elements of $C^{\infty}(\mathcal{M},\mathcal{U}(Lie(G)))$, then the universality property of the free $D$-algebra implies that there exists a unique $D$-algebra morphism $\mathcal{F}_f: \mathcal{OF}_{ \{1,\dots,d \} } \to C^{\infty}(\mathcal{M},\mathcal{U}(Lie(G)))$ given by $\mathcal{F}_f(\bullet_i)=f_i$, for $i=1,\dots,d$.

\begin{definition} \label{def::PlanarRoughSolution}
A formal solution to Equation \eqref{eq::PlanarRoughDiffEq} is given by
\begin{align*}
	Y_{st}=\#\mathcal{F}_f(\mathbb{Y}_{st})(y),
\end{align*}
where
\begin{align*}
	\mathbb{Y}_{st}=\sum_{\omega \in \mathcal{OF}_{ \{1,\dots,d\} }}\langle \mathbb{X}_{st},\omega \rangle \omega,
\end{align*}
and where $\mathbb{X}_{st}$ is any planarly branched rough path such that $\langle \mathbb{X}_{st},\bullet_i\rangle = X_t^i-X_s^i,$ for $i=1,\dots,d$.
\end{definition}

\subsection{Translations in geometric- and branched rough paths} \label{ssection::GeoBranchedTranslations}

We recall the notion of translation of rough paths from Bruned et.~al.~\cite{BrunedChevyrevFrizPreiss2017}.

\smallskip

Let $(T(\mathcal{C}),\shuffle,\Delta_{\odot})$ denote the shuffle Hopf algebra of non-commutative words with letters from the finite alphabet $\mathcal{C}$, with deconcatenation as coproduct. Let $\mathcal{B}$ be a basis of Lie polynomials in $T(\mathcal{C})$ such that the Hopf algebra is combinatorial and non-degenerate. Then a geometric rough path is a $T(\mathcal{C})$-rough path. We shall denote the letters in $\mathcal{C}=\mathcal{B}_1$ by $e_i, \; i=1,\ldots,n$, for $|\mathcal{C}|=n$. Let $(T(\mathcal{C})^{\ast},\odot,\Delta_{\shuffle}  )$ denote the graded dual Hopf algebra to $(T(\mathcal{C}),\shuffle,\Delta_{\odot}  )$, it can be identified with $T(\mathcal{C})$ by using the canonical dual basis. We write $\overline{T(\mathcal{C})^{\ast}}$ for its completion. The completed dual $(\overline{T(\mathcal{C})^{\ast}},\odot,\Delta_{\shuffle}  )$ can be equipped with a Hopf-type algebra structure. Note that this is not exactly a Hopf algebra as $\Delta_{\shuffle}$ does not map $\overline{T(\mathcal{C})^{\ast}}$ into $(\overline{T(\mathcal{C})^{\ast}})^{\otimes 2}$, but rather into $(T(\mathcal{C})^{\ast})^{\overline{\otimes} 2} \simeq \prod_{m,n=0}^{\infty}T(\mathcal{C})_m \otimes T(\mathcal{C})_n$. Then the infinitesimal characters of $(T(\mathcal{C}),\shuffle,\Delta_{\odot})$ are primitive in $(\overline{T(\mathcal{C})^{\ast}},\odot,\Delta_{\shuffle}  )$, and the characters are grouplike.\\

A translation $T_v:\overline{T(\mathcal{C})^{\ast}} \to \overline{T(\mathcal{C})^{\ast}} $, defined for a collection $v=(v_1,\ldots,v_n)$ of elements that are primitive with respect to $\Delta_{\shuffle}$, is the unique map given by
\allowdisplaybreaks
\begin{align*}
	T_v(e_i)=e_i+v_i
\end{align*}
and extended to be a continuous algebra morphism with respect to the concatenation product. The following properties hold \cite{BrunedChevyrevFrizPreiss2017}:
\begin{itemize}
\item $T_v$ maps primitive elements (infinitesimal characters) to primitive elements, and grouplike elements (characters) to grouplike elements.
\item $T_v \circ T_u = T_{v+T_v(u)}$.
\item $T_v$ maps rough paths to rough paths.
\item $T_v$ can dually be described by a coaction $\rho: T(\mathcal{C})\to S(T(\mathcal{C})_{in}\times \mathcal{C})\otimes T(\mathcal{C})$, where $T(\mathcal{C})_{in}$ are the non-trivial indecomposable elements, as $\langle T_v(\chi),x\rangle = \langle v \otimes \chi,\rho(x)\rangle$.
\end{itemize}

Let $\mathcal{H}_{BCK}^\mathcal{C}=(\mathcal{F}_{\mathcal{C}},\odot,\Delta_{BCK})$ be the Butcher--Connes--Kreimer Hopf algebra of non-planar rooted trees, and let $(\mathcal{H}_{BCK}^{\mathcal{C}})^{\ast}=(\mathcal{F}_\mathcal{C},\ast,\Delta_{\odot} )$ be the dual graded Hopf algebra. The basis $\mathcal{B}$ is given by the forests. In particular, $\mathcal{B}_1=\mathcal{C}$ is given by single vertex trees decorated by $\mathcal{C}$. Denote the completed dual by $\overline{(\mathcal{H}_{BCK}^{\mathcal{C}})^{\ast}}=(\overline{\mathcal{F}_{\mathcal{C}}},\ast,\Delta_{\odot})$, where $\Delta_{\odot}:\overline{\mathcal{F}_{\mathcal{C}}} \to (\mathcal{F}_{\mathcal{C}})^{\overline{\otimes} 2}$. Note that, as in the geometric case, this is not exactly a Hopf algebra. \\

One may now attempt to define a translation $M_v: \overline{(\mathcal{H}_{BCK}^{\mathcal{C}})^{\ast}} \to \overline{(\mathcal{H}_{BCK}^{\mathcal{C}})^{\ast}}$ by
\begin{align*}
	M_v(e_i)=e_i+v_i,
\end{align*}
for $v_i$ primitive, and extend this as a continuous algebra morphism with respect to $\ast$. It turns out that this construction does not admit a unique extension. \\

One algebraic structure that does extend uniquely from single-vertex trees to trees is the pre-Lie algebraic structure. Extending the map $M_v$ from the previous paragraph to be a pre-Lie algebra morphism on rooted trees, and a $\ast$ morphism on forests, gives us the notion of translation in branched rough paths from \cite{BrunedChevyrevFrizPreiss2017}. Then $M_v$ has the properties:
\begin{itemize}
\item $M_v$ maps primitive elements to primitive elements, and grouplike elements to grouplike elements.
\item $M_v \circ M_u = M_{v+M_v(u)}$.
\item $M_v$ maps rough paths to rough paths.
\item $M_v$ can dually be described by a coaction $\rho: \mathcal{H}^{\mathcal{C}}_{BCK}\to S((\mathcal{H}^{\mathcal{C}}_{BCK})_{in}\times \mathcal{C})\otimes \mathcal{H}^{\mathcal{C}}_{BCK}$, where $(\mathcal{H}^{\mathcal{C}}_{BCK})_{in}$ are the non-trivial indecomposable elements, as $\langle M_v(\chi),x\rangle = \langle v \otimes \chi,\rho(x)\rangle$.
\end{itemize}

\section{Translations in rough paths} 
\label{section::Translations}

We propose to use the properties from Section \ref{ssection::GeoBranchedTranslations} as the definition for translations in rough paths over any combinatorial Hopf algebra. Before we write down the definition, we introduce some notation.\\

Let $(\mathcal{H},\odot,\Delta,\eta,\epsilon)$ be a non-degenerate combinatorial Hopf algebra with basis $\mathcal{B}$. Let $(\mathcal{H}^{\ast},\ast,\Delta_{\odot})$ denote the graded dual space, with convolution product $\ast$ dual to $\Delta$ and coproduct $\Delta_{\odot}$ dual to $\odot$. Identify $\mathcal{H}$ with $\mathcal{H}^{\ast}$ via the dual basis. Let $(\overline{\mathcal{H}^{\ast}},\ast,\Delta_{\odot})$ be the completed dual equipped with a Hopf-type algebra structure. Let $\mathcal{H}_{in}$ denote the indecomposable elements of $\ker(\epsilon)$. Let $\mathcal{H}_{in}\times \mathcal{B}_1$ denote vector space of pairs $(v_i,e_i), \; v_i \in \mathcal{H}_{in}, \; e_i \in \mathcal{B}_1$ where the vector space structure is given by linearity in the first component. Let $(S(\mathcal{H}_{in}\times \mathcal{B}_1),\centerdot,\Delta_{\centerdot})$ denote the free cofree unital commutative co-commutative Hopf algebra. Seeing $S(\mathcal{H}_{in}\times \mathcal{B}_1)$ as commutative polynomials in elements from $\mathcal{H}_{in}\times \mathcal{B}_1$, let it be graded by degree of the polynomials. Identify the graded dual space $S(\mathcal{H}_{in}\times \mathcal{B}_1)^{\ast}$ with $S(\mathcal{H}_{in} \times \mathcal{B}_1)$ by using the dual basis. Then $(S(\mathcal{H}_{in}\times \mathcal{B}_1)^{\ast},\centerdot,\Delta_{\centerdot})$ is a Hopf algebra, and the completion $(\overline{S(\mathcal{H}_{in}\times \mathcal{B}_1)^{\ast}},\centerdot,\Delta_{\centerdot})$ is a Hopf-type algebra such that
\begin{align*}
\langle x \centerdot y, z\rangle = \langle x \otimes y, \Delta_{\centerdot}(z)\rangle,
\end{align*}
for $x,y \in\overline{S(\mathcal{H}_{in}\times \mathcal{B}_1)^{\ast}}$ and $z \in S(\mathcal{H}_{in}\times \mathcal{B}_1)$. Define the map $\exp^{\centerdot}: \overline{S(\mathcal{H}_{in}\times \mathcal{B}_1)^{\ast}} \to \overline{S(\mathcal{H}_{in}\times \mathcal{B}_1)^{\ast}}$ by
\begin{align*}
\exp^{\centerdot}(x)=1+x+\frac{1}{2!}(x \centerdot x)+\frac{1}{3!}(x \centerdot x \centerdot x)+\cdots.
\end{align*}
Then $\exp^{\centerdot}$ maps primitive elements of $\overline{S(\mathcal{H}_{in}\times \mathcal{B}_1)^{\ast}}$ into characters over $S(\mathcal{H}_{in}\times \mathcal{B}_1)$ and satisfies the identity
\begin{align*}
\exp^{\centerdot}(x+y)=\exp^{\centerdot}(x)\centerdot \exp^{\centerdot}(y).
\end{align*}
For $v=\{v_1,\dots,v_n\}$ a set of primitive elements in $\overline{\mathcal{H}^{\ast}}$, with $n=|\mathcal{B}_1|$, denote
\begin{align*}
e^v=\exp^{\centerdot}\Big(\sum_{i=1}^n (v_i,e_i) \Big) \in \overline{S(\mathcal{H}_{in}\times \mathcal{B}_1)^{\ast}},
\end{align*}
this defines a bijection between the set of characters over $S(\mathcal{H}_{in}\times \mathcal{B}_1)$ and the set of possible parameters $v=\{v_1,\dots,v_n\}$. We are now ready to define a translation by $v$.

\begin{definition} \label{def::Translation}
A family of algebra morphisms $T_v: \overline{\mathcal{H}^{\ast}} \to \overline{\mathcal{H}^{\ast}}$ is a translation if
\begin{enumerate}
\item $T_v(e_i)=e_i+v_i$ for every $e_i \in \mathcal{B}_1$ and some $v=\{v_1,\dots,v_n\}$, $v_i\in \overline{\mathcal{H}^{\ast}}$ primitive.

\item $T_v \circ T_u = T_{v+T_v(u)}$, where $T_v(u)=\{T_v(u_1),\dots,T_v(u_n)\}$.

\item For each $\mathcal{H}$-rough path $\mathbb{X}_{st}$, the pointwise translation $T_v(\mathbb{X}_{st})=T_v(\mathbb{X})_{st}$ is a $\mathcal{H}$-rough path:
\begin{enumerate}

\item $T_v$ maps characters to characters.

\item $T_v$ is a morphism with respect to the convolution product of $\overline{\mathcal{H}^*}$.

\item The bound \allowdisplaybreaks\begin{align*}
\sup_{s \neq t} \frac{|\langle T_v(\mathbb{X}_{st}),x \rangle |}{|t-s|^{\gamma |x|}} < \infty
\end{align*}
holds.
\end{enumerate}
\item \label{item:coaction} There exists a coaction $\rho_T : \mathcal{H} \to S(\mathcal{H}_{in} \times \mathcal{B}_1)\otimes \mathcal{H}$ such that $\langle T_v(\chi),x\rangle = \langle e^v \otimes \chi,\rho_T(x)\rangle$.
\end{enumerate}
\end{definition}

\begin{remark}
Similar axioms were considered in \cite{BellingeriFrizPaychaPreiss2021}.
\end{remark}

By the property that $T_v$ maps rough paths to rough paths, we get that $\rho_T$ gives a cointeraction between $(S(\mathcal{H}_{in} \times \mathcal{B}_1),\centerdot,\Delta_{\centerdot})$ and $(\mathcal{H},\odot,\Delta)$.

\begin{lemma} \label{Lemma:characterevaluation}
If $\langle \chi,x \rangle = \langle \chi,y \rangle$ for every character $\chi$, then $x=y$.
\end{lemma}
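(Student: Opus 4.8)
The statement asserts that characters of a combinatorial Hopf algebra $\mathcal{H}$ separate points, i.e.\ the span of characters is weak-$*$ dense in $\mathcal{H}^*$ in the appropriate sense. The natural route is to exploit the grading and connectedness of $\mathcal{H}$, together with the correspondence between characters and infinitesimal characters via the exponential of the convolution product. Concretely, it suffices to show that if $x \neq y$ in $\mathcal{H}$, then there is a character $\chi$ with $\langle \chi, x\rangle \neq \langle\chi,y\rangle$; equivalently, setting $z = x-y \neq 0$, I must produce a character $\chi$ with $\langle\chi,z\rangle\neq 0$.

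The plan is as follows. First, since $\mathcal{H}$ is graded and connected, write $z$ in terms of its homogeneous components and let $n$ be the smallest degree in which $z$ has a nonzero component $z_n \in \mathcal{H}_n$; the degree-zero case is trivial because $\epsilon$ is a character and $\langle\epsilon,1\rangle = 1$, so assume $n \geq 1$. Next, recall that for a graded connected Hopf algebra every infinitesimal character $\alpha$ (a linear form vanishing on $1$ and satisfying $\langle\alpha, ab\rangle = \langle\alpha,a\rangle\epsilon(b) + \epsilon(a)\langle\alpha,b\rangle$) exponentiates under the convolution product $\ast$ to a character $\chi = \exp^{\ast}(\alpha) = \epsilon + \alpha + \tfrac12 \alpha\ast\alpha + \cdots$, and this sum is finite on each homogeneous component because $\mathcal{H}$ is connected. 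The key computation is that $\langle \exp^{\ast}(\alpha), z_n\rangle = \langle\alpha, z_n\rangle + (\text{terms involving } \langle\alpha,-\rangle \text{ on strictly lower positive degrees})$, since $\alpha\ast\alpha$ and higher powers applied to a degree-$n$ element only see the reduced coproduct, which lands in lower-degree tensor factors. Now choose $\alpha$ to vanish on $\mathcal{H}_1,\dots,\mathcal{H}_{n-1}$ entirely (this is consistent with being an infinitesimal character: infinitesimal characters are precisely the linear forms vanishing on $1$ and on products of two elements of $\ker\epsilon$, i.e.\ determined freely by their values on the indecomposables $\mathcal{H}_{in}$) and to be nonzero on $z_n$ — possible because $z_n$, being a nonzero element of $\ker\epsilon$ in a connected graded Hopf algebra, is not annihilated by all indecomposable-supported functionals, as $\ker\epsilon = \mathcal{H}_{in} \oplus (\ker\epsilon)^2$ and one can detect any nonzero class. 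With this $\alpha$, all the lower-degree correction terms vanish and $\langle\chi,z_n\rangle = \langle\alpha,z_n\rangle \neq 0$, while $\langle\chi, z_m\rangle$ for $m<n$ also involves only $\alpha$ on degrees $\le m < n$, hence vanishes; therefore $\langle\chi,z\rangle = \langle\chi,z_n\rangle + \sum_{m>n}\langle\chi,z_m\rangle$, which we can further arrange to be nonzero by, if necessary, rescaling $\alpha$ by a generic scalar so that no cancellation occurs among the finitely many degrees present in $z$.

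The main obstacle I anticipate is the last point: ensuring $\langle\chi, z\rangle \neq 0$ rather than merely $\langle\chi,z_n\rangle\neq 0$, since the higher-degree components $z_m$ ($m>n$) may contribute. This is handled cleanly by the scaling trick: replace $\alpha$ by $t\alpha$ for a scalar parameter $t$; then $\langle\exp^{\ast}(t\alpha), z\rangle$ is a polynomial in $t$ whose coefficient of $t^1$ is exactly $\langle\alpha, z_n\rangle \neq 0$ (the degree-$n$ piece is linear in $t$ since higher convolution powers on degree $n$ need at least two positive-degree tensor factors, contributing $t^2$ or higher, but by our choice $\alpha$ vanishes below degree $n$ so those also vanish), so the polynomial is not identically zero, and a nonzero scalar $t$ can be chosen avoiding its finitely many roots — this uses characteristic zero, hence the field is infinite. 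A secondary technical point is justifying that infinitesimal characters are freely parametrized by their restriction to $\mathcal{H}_{in}$ and that $\exp^{\ast}$ genuinely lands among characters; both are standard for graded connected Hopf algebras and are implicitly used throughout the paper (cf.\ the construction of $e^v$ via $\exp^{\centerdot}$), so I would cite them rather than reprove them.
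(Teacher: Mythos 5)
There is a genuine gap at the key step where you choose an infinitesimal character $\alpha$ that vanishes on $\mathcal{H}_1,\dots,\mathcal{H}_{n-1}$ and is nonzero on the lowest-degree component $z_n$ of $z=x-y$. Infinitesimal characters vanish on $1$ and on all of $(\ker\epsilon)^2$, so if $z_n$ is decomposable there is \emph{no} infinitesimal character at all with $\langle\alpha,z_n\rangle\neq 0$, and your assertion that ``$\ker\epsilon=\mathcal{H}_{in}\oplus(\ker\epsilon)^2$ lets one detect any nonzero class'' does not apply, since the class of $z_n$ in the indecomposable quotient may be zero. Concretely, take $x=e_1\odot e_1$ and $y=e_1\odot e_2$ (in the paper's setting: two two-tree forests, or two two-letter words), so $z=e_1\odot e_1-e_1\odot e_2$ is homogeneous of degree $2$ and lies in $(\ker\epsilon)^2$: every infinitesimal character kills $z$, and with your additional requirement that $\alpha$ vanish in degree $1$ you get $\exp^{\ast}(\alpha)=\epsilon$, which also kills $z$. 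Characters do separate such elements, but only through the quadratic and higher convolution powers in $\exp^{\ast}(\alpha)$ (here $\tfrac12\langle\alpha\ast\alpha,z\rangle=\alpha(e_1)^2-\alpha(e_1)\alpha(e_2)$), i.e.\ through the multiplicativity of $\chi$ on indecomposables --- precisely the nonlinear terms your minimal-degree/linear-term strategy discards. So the strategy cannot be repaired by rescaling; one has to argue as the paper does (or via Leray/Milnor--Moore: $\mathcal{H}\cong S(Q)$ as a commutative algebra, characters are arbitrary functionals on the indecomposables $Q$, and a nonzero element of $S(Q)$ is a nonzero polynomial function of those values, hence nonvanishing at some character since $\mathbb{K}$ is infinite). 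A secondary, more cosmetic error: the coefficient of $t$ in $\langle\exp^{\ast}(t\alpha),z\rangle$ is $\langle\alpha,z\rangle=\sum_m\langle\alpha,z_m\rangle$, not just $\langle\alpha,z_n\rangle$; that part could be patched by making $\alpha$ vanish outside degree $n$, but the decomposability issue above is fatal to the overall argument.
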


\begin{proof}
Characters are determined by their value on indecomposable elements. Suppose that $x \neq y$, then there is an indecomposable element $z$ that appears a different number of times in a factorization of $x$ compared to a factorization of $y$. Generate a new character $\chi'$ that evaluates to the same value as $\chi$ on all indecomposable elements except $z$, and to a different value than $\chi$ on $z$. Then the character property implies that $\langle \chi',x \rangle \neq \langle \chi',y\rangle$.
\end{proof}

\begin{proposition} \label{Thm::Cointeraction}
The Hopf algebra $(S(\mathcal{H}_{in} \times \mathcal{B}_1),\centerdot,\Delta_{\centerdot})$ is in cointeraction with $(\mathcal{H},\odot,\Delta)$ by the coaction $\rho_T$ specified in Definition \ref{def::Translation}, property \ref{item:coaction}.
\end{proposition}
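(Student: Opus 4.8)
The strategy is to verify the four axioms of Definition \ref{def::cointeraction} one by one, translating each into a statement about the map $T_v$ via the pairing $\langle T_v(\chi),x\rangle = \langle e^v\otimes\chi,\rho_T(x)\rangle$, and then using the properties already collected in Definition \ref{def::Translation} together with Lemma \ref{Lemma:characterevaluation} to deduce the identity on $\mathcal{H}$. The key observation that makes this work is that $\exp^{\centerdot}$ furnishes a bijection between characters $e^v$ of $S(\mathcal{H}_{in}\times\mathcal{B}_1)$ and admissible parameter sets $v$, so testing an identity in $S(\mathcal{H}_{in}\times\mathcal{B}_1)\otimes\mathcal{H}$ against all $e^v\otimes\chi$ (with $\chi$ ranging over characters of $\mathcal{H}$) detects equality, by a double application of Lemma \ref{Lemma:characterevaluation} in each tensor slot.

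First I would handle the counit-type and unit-type axioms, which are the easy ones. For $\rho_T(1_{\mathcal{H}})=1\otimes 1$: pair both sides with $e^v\otimes\chi$; the left gives $\langle T_v(\chi),1\rangle$, and since $T_v(\chi)$ is again a character (property 3(a)) this equals $1$, matching the right-hand side. For $(Id\otimes\epsilon)\rho_T = 1\,\epsilon$: applying $\epsilon$ in the second slot corresponds to evaluating the output character $T_v(\chi)$ at $1$, which is again $1$, and one reads off that $(Id\otimes\epsilon)\rho_T(x)=\epsilon(x)\,1$. The multiplicativity axiom $\rho_T(x\odot_{\mathcal H} y)=\rho_T(x)(\centerdot\otimes\odot_{\mathcal H})\rho_T(y)$ is exactly the statement that $T_v$ is a $\ast$-algebra morphism (property 3(b)): pairing the left side with $e^v\otimes\chi$ gives $\langle T_v(\chi),x\odot y\rangle$, which because $T_v(\chi)$ is a character factors as $\langle T_v(\chi),x\rangle\langle T_v(\chi),y\rangle$; pairing the right side and using that $e^v$ is a character for $\centerdot$ and $\chi$ is a character for $\odot$ gives the same product. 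Since this holds for all characters, Lemma \ref{Lemma:characterevaluation} applied in each slot gives equality in $S(\mathcal{H}_{in}\times\mathcal{B}_1)\otimes\mathcal{H}$.

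The main obstacle is the coassociativity-type axiom $(Id\otimes\Delta_{\mathcal H})\rho_T = m^{1,3}_{S}(\rho_T\otimes\rho_T)\Delta_{\mathcal H}$, which is where the composition law $T_v\circ T_u = T_{v+T_v(u)}$ (property 2) must enter. The idea is: pairing $(Id\otimes\Delta_{\mathcal H})\rho_T(x)$ against $e^v\otimes\chi_1\otimes\chi_2$ yields $\langle T_v(\chi_1\ast\chi_2),x\rangle = \langle T_v(\chi_1)\ast T_v(\chi_2),x\rangle$ using property 3(b) again (or directly, $\langle T_v(\chi_1\ast\chi_2),x\rangle=\langle T_v(\chi_1)\otimes T_v(\chi_2),\Delta_{\mathcal H}(x)\rangle$). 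For the right side, $m^{1,3}_S$ merges the two $S(\mathcal{H}_{in}\times\mathcal{B}_1)$-factors, so pairing with $e^v\otimes\chi_1\otimes\chi_2$ uses $e^v$ against both copies of $\rho_T$; the nontrivial point is that evaluating $\rho_T$ twice with the same $e^v$ — first producing $T_v(\chi_i)$ and then feeding in the parameter again — is precisely encoded by the composition formula, and one must check that the "diagonal" use of $e^v$ on $\Delta_{\centerdot}$ together with $T_v(u)$-type reparametrizations is consistent with $m^{1,3}_S$. I would make this precise by first establishing, from property 2 and the bijection $v\leftrightarrow e^v$, the dual statement that $\rho_T$ is coassociative in the appropriate sense $(\Delta_{\centerdot}\otimes Id)\rho_T = (Id\otimes\rho_T)\rho_T$ up to the twist, and then observe that this coassociativity combined with multiplicativity is equivalent to the stated $m^{1,3}_S$-identity. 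Once all four axioms are checked on characters and lifted by Lemma \ref{Lemma:characterevaluation}, the proof is complete.
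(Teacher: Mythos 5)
Your handling of the unit, counit, and multiplicativity axioms is essentially the paper's argument: pair against $e^v\otimes\chi$, use that $T_v(\chi)$, $e^v$ and $\chi$ are characters, and separate with Lemma \ref{Lemma:characterevaluation} together with the fact that every character of $S(\mathcal{H}_{in}\times\mathcal{B}_1)$ is an $e^v$. The first half of your paragraph on the compatibility axiom is also already the complete proof: $\langle e^v\otimes\psi\otimes\chi,(Id\otimes\Delta)\rho_T(x)\rangle=\langle T_v(\psi\ast\chi),x\rangle=\langle T_v(\psi)\ast T_v(\chi),x\rangle=\langle e^v\otimes\psi\otimes e^v\otimes\chi,(\rho_T\otimes\rho_T)\Delta(x)\rangle$, and since $e^v$ is a $\centerdot$-character the last expression equals the pairing of $e^v\otimes\psi\otimes\chi$ against $m^{1,3}(\rho_T\otimes\rho_T)\Delta(x)$; Lemma \ref{Lemma:characterevaluation} then gives the identity. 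Nothing further is required. (Your counit step is garbled as written --- there is no $\chi$ there; what you need is that $\epsilon_{\mathcal{H}}$, viewed as the unit of $\overline{\mathcal{H}^{\ast}}$, is itself a character and is fixed by the unital morphism $T_v$, so $\langle e^v\otimes \epsilon,\rho_T(x)\rangle=\epsilon(x)$.)

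The genuine flaw is the route you propose for making the fourth axiom ``precise.'' The composition law $T_v\circ T_u=T_{v+T_v(u)}$ does not enter this proposition at all; the paper remarks explicitly after the proof that neither property 2 nor $T_v(e_i)=e_i+v_i$ is used. The $m^{1,3}$-axiom is dual to property 3(b) ($T_v$ is a morphism for the convolution product $\ast$), whereas property 2 dualizes to a different identity, the shifted coassociativity \eqref{eq::CoTranslationAssociative}, which only makes sense after extending $\rho_T$ to $S(\mathcal{H}_{in}\times\mathcal{B}_1)$ and is established in a separate proposition; it is not one of the cointeraction axioms of Definition \ref{def::cointeraction}. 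In particular your intermediate claim that $(\Delta_{\centerdot}\otimes Id)\rho_T=(Id\otimes\rho_T)\rho_T$ ``up to the twist'' is false as stated ($\rho_T$ is not coassociative; the correct relation involves $m^{1,2}$ and an extra application of $\rho_T$), and it is neither equivalent to nor needed for the $m^{1,3}$-identity: the two identities encode different properties of $T_v$. Dropping that detour and finishing the direct computation you began yields exactly the paper's proof.
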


\begin{proof}
We have that $\rho_T(1) = 1 \otimes 1$ by the assumption that $T_v$ is an algebra morphism. \\

Let $x,y \in \mathcal{H}$ and let $\chi$ be a character, then since $T_v$ maps characters to characters:
\allowdisplaybreaks\begin{align*}
\langle e^v \otimes \chi,\rho_T(x \odot y) \rangle =& \langle T_v(\chi),x \odot y \rangle\\
=& \langle T_v(\chi),x\rangle \langle T_v(\chi),y \rangle \\
=& \langle e^v \otimes \chi, \rho_T(x)\rangle \langle e^v \otimes \chi ,\rho_T(y)\rangle,
\end{align*}
which implies
\allowdisplaybreaks\begin{align*}
\rho_T(x \odot y)=\rho_T(x)\odot \rho_T(y).
\end{align*}
The identitiy $(Id \otimes \epsilon)\rho_T = 1\epsilon$ follows from $\langle e^v \otimes 1, \rho_T(x) \rangle =0$ whenever $x \neq 1$. Where the unit $1 \in \overline{\mathcal{H}^{\ast}}$ is the same as the counit of $(\mathcal{H},\odot,\Delta)$, by the dual basis identification. \\

Lastly we need to prove the identity $(Id \otimes \Delta)\rho_T = m^{1,3}(\rho_T \otimes \rho_T)\Delta$. Recall that $T_v$ maps rough paths to rough paths, meaning that it satisfies $T_v(\psi)\ast T_v(\chi)=T_v(\psi \ast \chi)$ for $\psi,\chi$ characters. This means:
\allowdisplaybreaks\begin{align*}
\langle e^v \otimes \psi \otimes \chi , (Id \otimes \Delta)\rho_T(x) \rangle =&\langle e^v \otimes \psi \ast \chi, \rho_T(x) \rangle \\
=&\langle T_v(\psi \ast \chi),x \rangle \\
=&\langle T_v(\psi)\ast T_v(\chi),x \rangle\\ 
=& \langle T_v(\psi) \otimes T_v(\chi),\Delta(x) \rangle \\
=&\langle e^v \otimes \psi \otimes e^v \otimes \chi, (\rho_T \otimes \rho_T)\Delta(x) \rangle.
\end{align*}
The property
\begin{align*}
(Id \otimes \Delta)\rho_T=m^{1,3} (\rho_T \otimes \rho_T)\Delta
\end{align*}
now follows from Lemma \ref{Lemma:characterevaluation} and the fact that $e^v$ is a character, as $e^v$ evaluated on the first component of the tensor, multiplied by $e^v$ evaluated on the third component of the tensor, is the same as $e^v$ evaluated on the product of the components. Hence we have proved all the properties of cointeraction and the theorem follows.
\end{proof}

Note that the proof does not require the use of the two central translation properties $T_v \circ T_u=T_{v+T_v(u)}$ and $T_v(e_i)=e_i+v_i$. The properties $1.$ and $2.$ from Definition \ref{def::Translation} corresponds to properties of the coaction. We describe these properties in the two following propositions.\\

Extend the map to $\rho_T: \mathcal{H}_{in}\times \mathcal{B}_1 \to S(\mathcal{H}_{in}\times \mathcal{B}_1) \otimes S(\mathcal{H}_{in}\times \mathcal{B}_1)$ by letting it act on the first component, and then to $S(\mathcal{H}_{in}\times \mathcal{B}_1)$ as an algebra morphism, i.e. let
\begin{align*}
\rho_T( (w_1,e_{i_1})\centerdot \dots \centerdot (w_k,e_{i_k}) )=(\rho_T(w_1),e_{i_1})\centerdot \dots \centerdot (\rho_T(w_k),e_{i_k}).
\end{align*}

\begin{proposition}
The coaction $\rho_T$, when extended to $S(\mathcal{H}_{in} \times \mathcal{B}_1)$, satisfies the identity
\allowdisplaybreaks
\begin{align} \label{eq::CoTranslationAssociative}
(Id \otimes \rho_T)\rho_T=m^{1,2}((Id \otimes \rho_T \otimes Id)((\Delta_{\centerdot} \otimes Id)\rho_T)),
\end{align}
where
\allowdisplaybreaks
\begin{align*}
m^{1,2}(x_1 \otimes x_2 \otimes x_3 \otimes x_4)=x_1 \centerdot x_2 \otimes x_3 \otimes x_4.
\end{align*}
\end{proposition}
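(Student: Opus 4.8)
The statement is the ``co-associativity-type'' compatibility that the coaction $\rho_T$ must satisfy as a consequence of the composition law $T_v \circ T_u = T_{v + T_v(u)}$ from Definition \ref{def::Translation}, property 2. The plan is to dualize this composition law, exactly as Proposition \ref{Thm::Cointeraction} dualized the other axioms. First I would observe that both sides of \eqref{eq::CoTranslationAssociative} are maps $\mathcal{H} \to S(\mathcal{H}_{in}\times\mathcal{B}_1)^{\otimes 2}\otimes\mathcal{H}$, and that it suffices to test them against $e^u \otimes e^v \otimes \chi$ for arbitrary primitive families $u,v$ and an arbitrary character $\chi$; indeed $e^u$ ranges over all characters of $S(\mathcal{H}_{in}\times\mathcal{B}_1)$ as $u$ ranges over primitive families, and Lemma \ref{Lemma:characterevaluation} lets us recover an element of $\mathcal{H}$ (tensored with the other factors) from its pairing with all characters.

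\textbf{Key steps.} (1) Rewrite the defining relation $\langle T_v(\chi), x\rangle = \langle e^v\otimes\chi, \rho_T(x)\rangle$ together with the extension of $\rho_T$ to $S(\mathcal{H}_{in}\times\mathcal{B}_1)$: the point is that applying $T_v$ componentwise to a parameter family $u$ corresponds, under the $e^{(-)}$ bijection, to $\langle e^{T_v(u)}, \xi\rangle = \langle e^v\otimes e^u, (\rho_T\otimes Id)\Delta_\centerdot(\xi)\rangle$ — here the $\Delta_\centerdot$ splits the polynomial $\xi$ so that $\rho_T$ can act (on first components) of one copy while $e^u$ reads off the original parameters, and $e^v$ reads the translated correction; this is the source of the $(\Delta_\centerdot\otimes Id)\rho_T$ and the inner $(Id\otimes\rho_T\otimes Id)$ on the right-hand side. (2) Also record that $e^{v + w} = e^v \centerdot e^w$, which is the identity $\exp^\centerdot(x+y)=\exp^\centerdot(x)\centerdot\exp^\centerdot(y)$ quoted before Definition \ref{def::Translation}; this produces the outer multiplication $m^{1,2}$ that merges the contribution of $v$ with the contribution of $T_v(u)$. (3) Now expand $\langle T_v\circ T_u(\chi), x\rangle$ in two ways. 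On one hand, $T_v(T_u(\chi))$ means first $\langle T_u(\chi), -\rangle = \langle e^u\otimes\chi, \rho_T(-)\rangle$ and then applying $T_v$ to that character, giving $\langle e^v \otimes (e^u\otimes\chi)\circ\rho_T, \rho_T(x)\rangle = \langle e^v\otimes e^u\otimes\chi, (Id\otimes\rho_T)\rho_T(x)\rangle$ — this is the left-hand side of \eqref{eq::CoTranslationAssociative}. On the other hand, by property 2 this equals $\langle T_{v+T_v(u)}(\chi), x\rangle = \langle e^{v + T_v(u)}\otimes\chi, \rho_T(x)\rangle$, and unfolding $e^{v+T_v(u)} = e^v \centerdot e^{T_v(u)}$ and then $e^{T_v(u)}$ via step (1) produces precisely $\langle e^v\otimes e^u\otimes\chi, m^{1,2}(Id\otimes\rho_T\otimes Id)(\Delta_\centerdot\otimes Id)\rho_T(x)\rangle$. (4) Conclude by Lemma \ref{Lemma:characterevaluation} applied in the $\mathcal{H}$-factor (with the two $S$-factors and the evaluations against arbitrary $u,v$ held as free data) that the two maps agree.

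\textbf{Main obstacle.} The routine parts — dualizing $T_v\circ T_u = T_{v+T_v(u)}$ and matching $m^{1,2}$ with $\exp^\centerdot(x+y)=\exp^\centerdot(x)\centerdot\exp^\centerdot(y)$ — are straightforward. The delicate step is step (1): pinning down precisely how ``apply $T_v$ to each $u_i$'' translates, on the dual side, into the operator $(Id\otimes\rho_T\otimes Id)(\Delta_\centerdot\otimes Id)\rho_T$ acting on the $S(\mathcal{H}_{in}\times\mathcal{B}_1)$-valued output. One must check that the extension of $\rho_T$ to $S(\mathcal{H}_{in}\times\mathcal{B}_1)$ defined just before the proposition — acting on first components and then multiplicatively — is exactly compatible with how $e^{T_v(u)}$ pairs against a polynomial $\xi$, i.e. that $\langle e^{T_v(u)}, \xi\rangle = \langle e^v \otimes e^u, (\rho_T\otimes Id)\Delta_\centerdot(\xi)\rangle$; this requires being careful that $\rho_T$ acts only on the $\mathcal{H}_{in}$-components (the $v_i$'s get translated, the labels $e_i$ do not) and that the co-commutative splitting $\Delta_\centerdot$ correctly distributes the monomial factors of $\xi$ between the ``translated'' and ``untranslated'' readings. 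Once this dictionary entry is established, the rest is bookkeeping.
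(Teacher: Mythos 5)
Your proposal takes essentially the same route as the paper: pair both sides against $e^v \otimes e^u \otimes \chi$, dualize $T_v\circ T_u = T_{v+T_v(u)}$, unfold $e^{v+T_v(u)} = e^v \centerdot e^{T_v(u)}$ using the multiplicative extension of $\rho_T$, and conclude because evaluation against all characters separates elements (Lemma \ref{Lemma:characterevaluation}). One correction to your step (1): the dictionary for the extension is simply $\langle e^{T_v(u)},\xi\rangle = \langle e^v \otimes e^u, \rho_T(\xi)\rangle$ (your formula $\langle e^v\otimes e^u,(\rho_T\otimes Id)\Delta_{\centerdot}(\xi)\rangle$ does not typecheck, pairing two functionals against three tensor legs); the $\Delta_{\centerdot}$ arises only from splitting $e^v \centerdot e^{T_v(u)}$ against the left leg of $\rho_T(x)$, and the merge $m^{1,2}$ comes from $e^v$ being a character of $S(\mathcal{H}_{in}\times\mathcal{B}_1)$ (so the two $e^v$-legs collapse onto the $\centerdot$-product), not from the additivity of $\exp^{\centerdot}$ itself.
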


\begin{proof}
\allowdisplaybreaks\begin{align*}
\langle e^v \otimes e^u \otimes \chi,(Id \otimes \rho_T)\rho_T(x)\rangle=&\langle e^v \otimes T_u(\chi),\rho_T(x)\rangle\\
=&\langle T_v(T_u(\chi)),x\rangle\\
=&\langle T_{v+T_v(u)}(\chi),x\rangle \\
=& \langle e^{v+T_v(u)}\otimes \chi, \rho_T(x)\rangle \\
=&\langle e^v \centerdot e^{T_v(u)}\otimes \chi, \rho_T(x)\rangle \\
=&\langle e^v \otimes T_v(u) \otimes \chi, (\Delta_{\centerdot} \otimes Id)\rho_T(x)\rangle \\
=&\langle e^v \otimes e^v \otimes e^u \otimes \chi, (Id \otimes \rho_T \otimes Id)((\Delta_{\centerdot} \otimes Id)\rho_T(x)) \rangle \\
=&\langle e^v \otimes e^u \otimes \chi, m^{1,2}((Id \otimes \rho_T \otimes Id)((\Delta_{\centerdot} \otimes Id)\rho_T(x)))\rangle.
\end{align*}
\end{proof}

\begin{remark}
The map $\rho_T$ is not coassociative, meaning that the identity
\begin{align}
	(Id \otimes \rho_T)\rho_T=(\rho_T\otimes Id)\rho_T \label{eq::Coassociative}
\end{align}
does not hold. We instead have the relation \eqref{eq::CoTranslationAssociative}, which can be understood as a shifted coassociativity. The $\rho_T$ in $(\rho_T\otimes Id)$ from the coassociativity relation \eqref{eq::Coassociative} will as input take a monomial in the product $\centerdot$, and evaluate on each factor of this monomial by the property of being a $\centerdot$-morphism. The relation \eqref{eq::CoTranslationAssociative} says that instead of letting $\rho_T$ evaluate on each factor of the input, we have to sum over all possible ways of letting $\rho_T$ evaluate on a subset of factors.

One way to informally think of this property is to see a translation $x \mapsto x+v$ as being a sum of an identity map $x \mapsto x$ and a substitution map $x \mapsto v$. The relation \eqref{eq::CoTranslationAssociative} can then be seen as the dual way to encode this sum. Factors that $\rho_T$ evaluates on will dually correspond to substitution and factors that $\rho_T$ does not evaluate on will dually correspond to the identity map.

We will elaborate on substitution maps and the relation to coassociativity in section \ref{section::Substitution}.
\end{remark}

\begin{proposition}
The coaction $\rho_T$ satisfies the identity
\allowdisplaybreaks
\begin{align*}
\langle e^v \otimes e_i, \rho_T(x)\rangle =\langle v_i+e_i,x\rangle,
\end{align*}
for $e_i \in \mathcal{B}_1$.
\end{proposition}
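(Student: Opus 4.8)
The plan is to derive the identity from property~\ref{item:coaction} of Definition~\ref{def::Translation}, namely $\langle T_v(\chi),x\rangle = \langle e^v\otimes\chi,\rho_T(x)\rangle$ for every character $\chi$, by a linearization trick: $e_i$ is not a character but is the first-order part of the one-parameter family of characters $g_t := 1 + t\,e_i + \tfrac{t^2}{2!}e_i^{\ast 2}+\cdots = \sum_{n\ge 0}\tfrac{t^n}{n!}e_i^{\ast n}$ (the convolution exponential of $t\,e_i$), which is grouplike in $\overline{\mathcal{H}^{\ast}}$ because the degree-one element $e_i$ is primitive there. So I would fix $x\in\mathcal{H}$, apply property~\ref{item:coaction} with $\chi = g_t$ to obtain $\langle T_v(g_t),x\rangle = \langle e^v\otimes g_t,\rho_T(x)\rangle$ for all scalars $t$, and then compare the coefficients of $t^1$ on the two sides.

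For the left-hand side I would use that $T_v$ is linear and a morphism for $\ast$ with $T_v(e_i) = e_i+v_i$, so that $T_v(e_i^{\ast n}) = (e_i+v_i)^{\ast n}$; since $v_i$ is primitive it sits in strictly positive degree, so $(e_i+v_i)^{\ast n}$ has degree $\ge n$. Consequently, after discarding the terms of degree exceeding $|x|$ (which are annihilated by $x$), $\langle T_v(g_t),x\rangle$ becomes a polynomial in $t$ whose coefficient of $t^1$ is $\langle e_i+v_i,x\rangle$; this truncation also removes any need to apply $T_v$ to an infinite series. For the right-hand side I would write $\rho_T(x) = \sum_j a_j\otimes h_j$, a finite sum since $\rho_T$ takes values in the uncompleted tensor product $S(\mathcal{H}_{in}\times\mathcal{B}_1)\otimes\mathcal{H}$; then $\langle e^v\otimes g_t,\rho_T(x)\rangle = \sum_j \langle e^v,a_j\rangle\sum_{n\ge 0}\tfrac{t^n}{n!}\langle e_i^{\ast n},h_j\rangle$ is again polynomial in $t$, with coefficient of $t^1$ equal to $\sum_j\langle e^v,a_j\rangle\langle e_i,h_j\rangle = \langle e^v\otimes e_i,\rho_T(x)\rangle$.

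Since $\mathbb{K}$ has characteristic zero, two polynomials agreeing for all scalar values of $t$ have equal coefficients, so equating the $t^1$-coefficients yields $\langle e^v\otimes e_i,\rho_T(x)\rangle = \langle e_i+v_i,x\rangle = \langle v_i+e_i,x\rangle$ for all $x\in\mathcal{H}$, which is the claim. I do not expect a genuine obstacle here; the only points needing care are the bookkeeping that both expressions really are polynomials in $t$ (resting on $x$ having finite degree, $\rho_T(x)$ being a finite sum, and $v_i$ lying in positive degree) and the observation that the degree-one element $e_i$ is automatically primitive in $\overline{\mathcal{H}^{\ast}}$, so that $g_t$ is indeed a character to which property~\ref{item:coaction} applies. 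An alternative, slightly less elementary route would be to note that $e_i = \lim_{t\to 0} t^{-1}(g_t - 1)$ lies in the closed linear span of characters and to invoke continuity of both sides in $\chi$; the coefficient-comparison argument has the advantage of requiring no continuity hypothesis on $T_v$.
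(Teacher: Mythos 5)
Your argument is correct, but it is genuinely different from, and considerably more elaborate than, the paper's proof. The paper proves the identity in one line: it applies the defining property $T_v(e_i)=e_i+v_i$ and then the duality relation of Definition~\ref{def::Translation}(\ref{item:coaction}) directly to $e_i$, i.e.\ $\langle e_i+v_i,x\rangle=\langle T_v(e_i),x\rangle=\langle e^v\otimes e_i,\rho_T(x)\rangle$ --- tacitly reading the relation $\langle T_v(\chi),x\rangle=\langle e^v\otimes\chi,\rho_T(x)\rangle$ as valid for arbitrary linear forms $\chi\in\overline{\mathcal{H}^{\ast}}$, not only characters (this is consistent with how the relation is used later when the converse direction is proved). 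You instead assume the duality only for characters and recover the statement for the primitive element $e_i$ by evaluating it on the grouplike family $g_t=\exp^{\ast}(te_i)$ and comparing $t^1$-coefficients; your verifications (primitivity of $e_i$ in $\overline{\mathcal{H}^{\ast}}$ by gradedness and connectedness, polynomiality in $t$ after truncation at degree $|x|$, finiteness of $\rho_T(x)$, characteristic zero) are all sound. What your route buys is robustness: it works under the weaker, literal reading of property~(\ref{item:coaction}). What it costs is that the expansion $\langle T_v(g_t),x\rangle=\sum_n\frac{t^n}{n!}\langle(e_i+v_i)^{\ast n},x\rangle$ does require $T_v$ to commute with the infinite convolution-exponential series; the truncation you invoke happens only after that interchange, so your closing claim that the coefficient-comparison argument needs no continuity hypothesis on $T_v$ is slightly overstated --- you are implicitly using continuity (or the paper's later Lemma~\ref{lemma::TranslationsCommuteWithMapsInSums}, whose content is exactly this commutation with infinite sums), which is harmless here since translations are built as continuous morphisms, but should be acknowledged.
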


\begin{proof}
\allowdisplaybreaks\begin{align*}
\langle e_i+v_i,x\rangle =& \langle T_v(e_i),x\rangle \\
=&\langle e^v \otimes e_i, \rho_T(x)\rangle.
\end{align*}
\end{proof}

We have now seen that every translation gives a cointeraction between $S(\mathcal{H}_{in} \times \mathcal{B}_1)$ and $\mathcal{H}$. One may then ask whether every such cointeraction will give a translation. Let $(\mathcal{H},\odot,\Delta)$ be a non-degenerate combinatorial Hopf algebra that is in cointeraction with $S(\mathcal{H}_{in}\times \mathcal{B}_1)$ by a coaction $\rho_T: \mathcal{H} \mapsto S(\mathcal{H}_{in}\times \mathcal{B}_1) \otimes \mathcal{H}$. Define a map $T_v: \overline{\mathcal{H}^{\ast}}\mapsto \overline{\mathcal{H}^{\ast}}$ by $\langle T_v(y),x\rangle = \langle e^v \otimes y, \rho_T(x)\rangle$. Then
\begin{enumerate}
\item $T_v$ is an algebra morphism:
\allowdisplaybreaks
\begin{align*}
\langle T_v(a \ast b),x\rangle =& \langle e^v \otimes a \otimes b,(Id \otimes \Delta)\rho_T(x) \rangle \\
=&\langle e^v \otimes a \otimes b,m^{1,3}(\rho_T \otimes \rho_T)\Delta(x)\rangle \\
=&\langle e^v \otimes a \otimes e^v \otimes b, (\rho_T \otimes \rho_T)\Delta(x)\rangle \\
=&\langle T_v(a)\otimes T_v(b),\Delta(x)\rangle \\
=&\langle T_v(a)\ast T_v(b),x\rangle,
\end{align*}
for all $a,b \in \mathcal{H}^{\ast}$.
\item $T_v$ maps characters to characters:
\allowdisplaybreaks
\begin{align*}
\langle T_v(\chi),x\odot y\rangle =& \langle e^v \otimes \chi, \rho_T(x \odot y)\rangle \\
=&\langle e^v \otimes \chi,\rho_T(x)\odot \rho_T(y)\rangle \\
=&\langle e^v \otimes \chi,\rho_T(x)\rangle \langle e^v \otimes \chi,\rho_T(y)\rangle \\
=&\langle T_v(\chi),x\rangle\langle T_v(\chi),y\rangle. 
\end{align*}
\item The bound 
\allowdisplaybreaks\begin{align*}
\sup_{s \neq t} \frac{|\langle T_v(\mathbb{X}_{st}),x \rangle |}{|t-s|^{\gamma |x|}} < \infty
\end{align*}
follows from
\allowdisplaybreaks\begin{align*}
|\langle T_v(\mathbb{X}_{st}),x \rangle | =& | \langle e^v \otimes \mathbb{X}_{st},\rho_T(x)\rangle | \\
=&|\langle e^v,x_{(1)}\rangle ||\langle \mathbb{X}_{st},x_{(2)}\rangle| \\
\leq& |\langle e^v,x_{(1)}\rangle | c^{|x_{(2)}|}q_{\gamma}(x_{(2)})|t-s|^{\gamma |x_{(2)}|}
\end{align*}
and that $|\langle e^v,x_{(1)}\rangle |$ is finite and independent of $|t-s|$.
\item If $(Id \otimes \rho_T)\rho_T=m^{1,2}((Id \otimes \rho_T \otimes Id)((\Delta_{\centerdot} \otimes Id)\rho_T))$, then:
\allowdisplaybreaks\begin{align*}
\langle T_v(T_u(\chi)),x\rangle =& \langle e^v \otimes T_u(\chi),\rho_T(x)\rangle \\
=&\langle e^v \otimes e^u \otimes \chi, (Id \otimes \rho_T)\rho_T(x)\rangle \\
=&\langle e^v \otimes e^u \otimes \chi, m^{1,2}((Id \otimes \rho_T \otimes Id)((\Delta_{\centerdot} \otimes Id)\rho_T(x)))\rangle \\
=&\langle e^v \otimes e^v \otimes e^u \otimes \chi, (Id \otimes \rho_T \otimes Id)((\Delta_{\centerdot} \otimes Id)\rho_T(x)) \rangle \\
=&\langle e^v \otimes e^{T_v(u)} \otimes \chi, (\Delta_{\centerdot} \otimes Id)\rho_T(x)\rangle \\
=&\langle e^v \centerdot e^{T_v(u)}\otimes \chi, \rho_T(x)\rangle \\
=& \langle e^{v + T_v(u)} \otimes \chi, \rho_T(x)\rangle \\
=&\langle T_{v+T_v(u)}(\chi),x\rangle.
\end{align*}
\item If $\langle e^v \otimes e_i, \rho_T(x)\rangle =\langle v_i+e_i,x\rangle$, then:
\allowdisplaybreaks\begin{align*}
\langle T_v(e_i),x \rangle =& \langle e^v \otimes e_i, \rho_T(x)\rangle \\
=&\langle v_i+e_i,x\rangle,
\end{align*}
hence $T_v(e_i)=e_i+v_i$.
\end{enumerate}

In total, we get the following result.

\begin{theorem}
Let $(\mathcal{H},\odot,\Delta)$ be a non-degenerate combinatorial Hopf algebra that is in cointeraction with $(S(\mathcal{H}_{in}\times \mathcal{B}_1),\centerdot,\Delta_{\centerdot})$ by a coaction $\rho_T: \mathcal{H} \to S(\mathcal{H}_{in}\times \mathcal{B}_1) \otimes \mathcal{H}$, satisfying
\allowdisplaybreaks
\begin{align*}
(Id \otimes \rho_T)\rho_T=\ &m^{1,2}((Id \otimes \rho_T \otimes Id)((\Delta_{\centerdot} \otimes Id)\rho_T)), \\
\langle e^v \otimes e_i, \rho_T(x)\rangle =\ &\langle v_i+e_i,x\rangle.
\end{align*}
Then the dual map $T_v$ given by
\allowdisplaybreaks
\begin{align*}
\langle T_v(y),x\rangle = \langle e^v \otimes y,\rho_T(x)\rangle
\end{align*}
is a translation.
\end{theorem}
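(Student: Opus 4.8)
The plan is to verify, for the map $T_v$ defined by $\langle T_v(y),x\rangle = \langle e^v\otimes y,\rho_T(x)\rangle$, each of the four conditions of Definition \ref{def::Translation}; since the five displayed computations preceding the statement already carry out these verifications, the task is really to match each computation to the condition it establishes and to fill the small gaps left by that bookkeeping. I would first record that $T_v$ is a well-defined endomorphism of $\overline{\mathcal{H}^*}$ — for homogeneous $x$ the tensor $\rho_T(x)$ has finitely many terms by gradedness, so $\langle e^v\otimes y,\rho_T(x)\rangle$ is a finite scalar depending linearly on $y$ — and that $\rho_T(1)=1\otimes1$ (the first cointeraction axiom) makes $T_v$ unital, so in particular $\langle T_v(\mathbb{X}_{st}),1\rangle=1$.

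The correspondence is then as follows. The computation $\langle T_v(a\ast b),x\rangle=\dots=\langle T_v(a)\ast T_v(b),x\rangle$ gives property~3(b) ($T_v$ is a morphism for $\ast$) and is the dual of the cointeraction axiom $(\mathrm{Id}\otimes\Delta)\rho_T=m^{1,3}(\rho_T\otimes\rho_T)\Delta$, the key point being that $e^v$ is a character, so that its occurrence in the two outer tensor slots collapses to a single factor $\langle e^v,x_{(1)}\rangle$. The computation $\langle T_v(\chi),x\odot y\rangle=\dots=\langle T_v(\chi),x\rangle\langle T_v(\chi),y\rangle$ gives property~3(a) and is the dual of $\rho_T(x\odot y)=\rho_T(x)\odot\rho_T(y)$. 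The estimate $|\langle T_v(\mathbb{X}_{st}),x\rangle|\le|\langle e^v,x_{(1)}\rangle|\,c^{|x_{(2)}|}q_\gamma(x_{(2)})|t-s|^{\gamma|x_{(2)}|}$ gives property~3(c) once one invokes Theorem \ref{Thm::estimate} for $\mathbb{X}$ and uses that the finitely many scalars $\langle e^v,x_{(1)}\rangle$ are independent of $t-s$. Finally $\langle T_v(e_i),x\rangle=\langle e^v\otimes e_i,\rho_T(x)\rangle=\langle v_i+e_i,x\rangle$ gives property~1, the $v_i$ being primitive by construction of $e^v$. Property~4 is the defining relation of $T_v$ itself, hence holds by fiat.

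The one computation with any substance is the group law, property~2, where the shifted-coassociativity hypothesis enters. Starting from $\langle T_v(T_u(\chi)),x\rangle=\langle e^v\otimes e^u\otimes\chi,(\mathrm{Id}\otimes\rho_T)\rho_T(x)\rangle$, I would substitute $(\mathrm{Id}\otimes\rho_T)\rho_T=m^{1,2}\big((\mathrm{Id}\otimes\rho_T\otimes\mathrm{Id})((\Delta_{\centerdot}\otimes\mathrm{Id})\rho_T)\big)$, then use $\Delta_{\centerdot}$-duality together with $\exp^{\centerdot}(a+b)=\exp^{\centerdot}(a)\centerdot\exp^{\centerdot}(b)$ to reassemble the result as $\langle e^{v+T_v(u)}\otimes\chi,\rho_T(x)\rangle=\langle T_{v+T_v(u)}(\chi),x\rangle$. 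To legitimize the symbol $e^{T_v(u)}$ one must know that each $T_v(u_i)$ is primitive; this comes from the same argument as property~3(a) applied to infinitesimal characters rather than characters (and uses $(\mathrm{Id}\otimes\epsilon)\rho_T=1\epsilon$). This yields $T_v\circ T_u=T_{v+T_v(u)}$ on characters, which is what is needed for the rough-path conclusion.

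I expect no single step to be a serious obstacle; what requires the most care is the Hölder estimate, because $\rho_T$ need not preserve the $\mathcal{H}$-degree — it is only shift-coassociative — so the expansion of $\langle T_v(\mathbb{X}_{st}),x\rangle$ genuinely mixes levels, and the bound survives only because each contribution factors as a fixed $(s,t)$-independent scalar $\langle e^v,x_{(1)}\rangle$ times a quantity controlled by Theorem \ref{Thm::estimate}, with finitely many terms. Secondarily, the group-law computation needs attention to which copy of $e^v$ pairs with which tensor slot and to confirming that $v+T_v(u)$ is again an admissible parameter; once the primitivity of $T_v(u_i)$ is in hand this is routine. Every remaining step is a line-by-line dualization of a stated hypothesis, so the proof amounts to assembling the preceding computations.
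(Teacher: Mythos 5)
Your proposal is correct and follows essentially the same route as the paper: the paper's proof of this theorem is precisely the five enumerated dual computations displayed just before the statement (algebra morphism from the $m^{1,3}$ cointeraction axiom, character property from multiplicativity of $\rho_T$, the H\"older bound via factoring out $\langle e^v,x_{(1)}\rangle$ and invoking the estimate of Theorem \ref{Thm::estimate}, the group law from the shifted coassociativity using that $e^v$ is a character, and $T_v(e_i)=e_i+v_i$ from the second hypothesis), and you match each of these to the corresponding item of Definition \ref{def::Translation} exactly as the paper does. Your added remarks (well-definedness and unitality of $T_v$, and primitivity of the $T_v(u_i)$ needed to form $e^{T_v(u)}$) are harmless supplements to the same argument rather than a different approach.
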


\section{Substitution in rough paths} 
\label{section::Substitution}

We find it useful to consider translations also as substitutions, as this ends up giving us simpler identities. By a substitution of rough paths, we mean the following.

\begin{definition}
Let $(\mathcal{H},\odot,\Delta)$ be a non-degenerate combinatorial Hopf algebra with basis $\mathcal{B}$. Let $(\mathcal{H}^{\ast},\ast,\Delta_{\centerdot})$ denote the graded dual space, with product $\ast$ dual to $\Delta$ and coproduct $\Delta_{\centerdot}$ dual to $\odot$. Identify $\mathcal{H}$ with $\mathcal{H}^{\ast}$ via the dual basis. Let $(\overline{\mathcal{H}^{\ast}},\ast,\Delta_{\odot})$ be the completed dual equipped with a Hopf-type algebra structure. A family of algebra morphisms $S_v: \overline{\mathcal{H}^{\ast}} \to \overline{\mathcal{H}^{\ast}}$ is a substitution if
\begin{enumerate}
\item for $v=\{v_1,\dots,v_n\}$, $v_i$ primitive, $S_v(e_i)=v_i$ for every $e_i \in \mathcal{B}_1$. 
\item $S_v \circ S_u = S_{S_v(u)}$, where $S_v(u)=\{S_v(u_1),\dots,S_v(u_n)\}$.
\item For each $\mathcal{H}$-rough path $\mathbb{X}_{st}$, the pointwise translation $S_v(\mathbb{X}_{st})=S_v(\mathbb{X})_{st}$ is a $\mathcal{H}$-rough path:
\begin{enumerate}
\item $S_v$ maps characters to characters.
\item $S_v$ is a morphism with respect to the convolution product of $\mathcal{H}$.
\item The bound \allowdisplaybreaks\begin{align*}
\sup_{s \neq t} \frac{|\langle S_v(\mathbb{X}_{st}),x \rangle |}{|t-s|^{\gamma |x|}} < \infty
\end{align*}
holds.
\end{enumerate}
\item There exists a coaction $\rho_S : \mathcal{H} \to S(\mathcal{H}_{in} \times \mathcal{B}_1)\otimes \mathcal{H}$ such that $\langle S_v(\chi),x\rangle = \langle e^v \otimes \chi,\rho_S(x)\rangle$.
\end{enumerate}
\end{definition}

Substitutions of rough paths are essentially the same as translations. Indeed, if $S_v$ is a substitution by $v=\{v_1,\ldots,v_n \}$ then $T_{v'}:=S_v$ is a translation by $v'=\{v_1-e_1,\dots,v_n-e_n \}$ and vice-versa. The condition $T_{v'}(e_i)=e_i+v_i'$ is clear. The condition $T_{v'} \circ T_{u'} = T_{v'+T_{v'}(u')}$ can be seen by the computation
\allowdisplaybreaks\begin{align*}
T_{v'} \circ T_{u'}=&S_v \circ S_u \\
=&S_{S_v(u)}\\
=&S_{T_{v'}(u)} \\
=&T_{(T_{v'}(u))'},
\end{align*}
where 
\allowdisplaybreaks\begin{align*}
(T_{v'}(u))'=&\{T_{v'}(u_1)-e_1,\dots,T_{v'}(u_n)-e_n  \} \\
=&\{T_{v'}(u'_1 +e_1)-e_1,\dots,T_{v'}(u'_n+e_n)-e_n  \} \\
=&\{T_{v'}(u'_1)+v'_1+e_1-e_1,\dots,T_{v'}(u'_n)+v'_n+e_n-e_n  \}\\
=&v'+T_{v'}(u').
\end{align*}

\begin{proposition}
The Hopf algebra $S(\mathcal{H}_{in} \times \mathcal{B}_1)$ is in cointeraction with $(\mathcal{H},\odot,\Delta)$ by the coaction $\rho_S$.
\end{proposition}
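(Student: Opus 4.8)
The plan is to mirror the proof of Proposition \ref{Thm::Cointeraction} almost verbatim, since the defining properties of a substitution $S_v$ that are needed for cointeraction are exactly the same as those of a translation $T_v$: $S_v$ is an algebra morphism (so $\rho_S(1)=1\otimes 1$), $S_v$ maps characters to characters, and $S_v$ maps rough paths to rough paths (so $S_v$ respects the convolution product of characters). Notably, the two properties that genuinely differ between translations and substitutions, namely $S_v(e_i)=v_i$ versus $T_v(e_i)=e_i+v_i$ and $S_v\circ S_u=S_{S_v(u)}$ versus $T_v\circ T_u=T_{v+T_v(u)}$, play no role in the cointeraction argument — exactly as the remark following the proof of Proposition \ref{Thm::Cointeraction} observes.

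Concretely, I would proceed in four steps. First, $\rho_S(1_\mathcal{H})=1\otimes 1$: this is immediate from $S_v$ being an algebra morphism (it sends the unit of $\overline{\mathcal{H}^\ast}$ to itself), together with the fact that $\langle e^v\otimes\chi,\rho_S(1)\rangle=\langle S_v(\chi),1\rangle=1$ for every character $\chi$. Second, the multiplicativity $\rho_S(x\odot y)=\rho_S(x)\odot\rho_S(y)$: since $S_v$ maps characters to characters, for any character $\chi$ one has $\langle e^v\otimes\chi,\rho_S(x\odot y)\rangle=\langle S_v(\chi),x\odot y\rangle=\langle S_v(\chi),x\rangle\langle S_v(\chi),y\rangle=\langle e^v\otimes\chi,\rho_S(x)\rangle\langle e^v\otimes\chi,\rho_S(y)\rangle$, and then Lemma \ref{Lemma:characterevaluation} (together with $e^v$ being a character) upgrades this to the desired identity. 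Third, $(Id\otimes\epsilon_{\mathcal{H}})\rho_S=1\epsilon_{\mathcal{H}}$: this follows because $\langle e^v\otimes 1,\rho_S(x)\rangle=\langle S_v(\chi_0),x\rangle$ vanishes for $x\neq 1$, where $1\in\overline{\mathcal{H}^\ast}$ is the counit of $\mathcal{H}$ under the dual-basis identification. Fourth, the crucial compatibility $(Id\otimes\Delta)\rho_S=m^{1,3}(\rho_S\otimes\rho_S)\Delta$: using that $S_v$ respects the convolution product of characters, $S_v(\psi\ast\chi)=S_v(\psi)\ast S_v(\chi)$, one computes for any characters $\psi,\chi$ that $\langle e^v\otimes\psi\otimes\chi,(Id\otimes\Delta)\rho_S(x)\rangle=\langle S_v(\psi\ast\chi),x\rangle=\langle S_v(\psi)\ast S_v(\chi),x\rangle=\langle S_v(\psi)\otimes S_v(\chi),\Delta(x)\rangle=\langle e^v\otimes\psi\otimes e^v\otimes\chi,(\rho_S\otimes\rho_S)\Delta(x)\rangle$, and the identity follows from Lemma \ref{Lemma:characterevaluation} and the fact that $e^v$ is a character, so that its value on the first tensor leg times its value on the third equals its value on the product of the two.

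Since every displayed step is a routine dualization already carried out for translations, I do not anticipate any genuine obstacle; the only point to be careful about is that $S_v$ maps characters to characters and respects the convolution product only when applied to characters (it is not claimed to be a coalgebra morphism), but this is precisely what parts (3)(a) and (3)(b) of the definition of a substitution provide, and the argument for Proposition \ref{Thm::Cointeraction} never used more than that. For this reason I would state the proof tersely, writing ``The proof is identical to that of Proposition \ref{Thm::Cointeraction}, with $T_v$ replaced by $S_v$ and $\rho_T$ by $\rho_S$ throughout,'' and optionally reproduce only the final computation verifying $(Id\otimes\Delta)\rho_S=m^{1,3}(\rho_S\otimes\rho_S)\Delta$ for the reader's convenience.
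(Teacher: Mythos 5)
Your proposal is correct and follows exactly the paper's route: the paper's own proof consists of the single remark that all the arguments from the proof of Proposition \ref{Thm::Cointeraction} apply, which is precisely the observation you make (and then usefully spell out step by step). Nothing further is needed.
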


\begin{proof}
All the arguments from the proof of Theorem \ref{Thm::Cointeraction} apply.
\end{proof}

Now extend the coaction to $\rho_S: \mathcal{H}_{in}\times \mathcal{B}_1 \to S(\mathcal{H}_{in}\times \mathcal{B}_1) \otimes S(\mathcal{H}_{in}\times \mathcal{B}_1)$ by letting it act on the first component and then to $S(\mathcal{H}_{in}\times \mathcal{B}_1)$ as an algebra morphism. Then:
\allowdisplaybreaks
\begin{align*}
\langle v \otimes u \otimes \chi,(Id \otimes \rho_S)\rho_S(x)\rangle=&\langle v \otimes S_u(\chi),\rho_S(x)\rangle\\
=&\langle S_v(S_u(\chi)),x\rangle\\
=&\langle S_{S_v(u)}(\chi),x\rangle \\
=& \langle T_v(u)\otimes \chi, \rho_S(x)\rangle \\
=&\langle v \otimes u \otimes \chi, (\rho_S \otimes Id)\rho_S(x)\rangle.
\end{align*}
Hence $\rho_S$ is a coassociative coproduct on $S(\mathcal{H}_{in}\times \mathcal{B}_1)$. This corresponds to condition \eqref{eq::CoTranslationAssociative} for translations.

\begin{proposition} 
Let $(\mathcal{H},\odot,\Delta)$ be a non-degenerate combinatorial Hopf algebra that is in cointeraction with $(S(\mathcal{H}_{in}\times \mathcal{B}_1),\centerdot,\Delta_{\centerdot})$ by a coaction $\rho_S: \mathcal{H} \to S(\mathcal{H}_{in}\times \mathcal{B}_1) \otimes \mathcal{H}$, satisfying
\allowdisplaybreaks\begin{align*}
(Id \otimes \rho_S)\rho_S=&(\rho_S \otimes Id)\rho_S, \\
\langle e^v \otimes e_i, \rho_S(x)\rangle =&\langle v_i,x\rangle.
\end{align*}
Then the dual map $S_v$ given by
\allowdisplaybreaks\begin{align*}
\langle S_v(y),x\rangle = \langle e^v \otimes y,\rho_S(x)\rangle
\end{align*}
is a substitution.
\end{proposition}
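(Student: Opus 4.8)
The statement is the converse direction for substitutions, parallel to the earlier converse theorem for translations, and the natural strategy is to mimic that proof, checking each of the four defining properties of a substitution in turn, but now exploiting that the extra hypothesis on $\rho_S$ is ordinary coassociativity rather than the shifted identity \eqref{eq::CoTranslationAssociative}. First I would verify that $S_v$ is an algebra morphism with respect to the convolution product $\ast$: this uses only the cointeraction identity $(Id \otimes \Delta)\rho_S = m^{1,3}(\rho_S \otimes \rho_S)\Delta$ together with the fact that $e^v$ is a character of $S(\mathcal{H}_{in}\times\mathcal{B}_1)$, exactly as in the translation case, so the chain $\langle S_v(a \ast b),x\rangle = \langle e^v \otimes a \otimes b, (Id \otimes \Delta)\rho_S(x)\rangle = \langle e^v \otimes a \otimes e^v \otimes b, (\rho_S \otimes \rho_S)\Delta(x)\rangle = \langle S_v(a)\ast S_v(b),x\rangle$ goes through verbatim. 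Next, that $S_v$ maps characters to characters follows from $\rho_S(x \odot y) = \rho_S(x)\odot\rho_S(y)$ (part of cointeraction) together with $e^v$ being a character, again as in the translation proof. The Hölder bound follows from $\rho_S(x) = x_{(1)} \otimes x_{(2)}$, Theorem \ref{Thm::estimate}, and finiteness of $|\langle e^v, x_{(1)}\rangle|$, which is independent of $|t-s|$.

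Then I would check property 1, $S_v(e_i) = v_i$: this is immediate from the hypothesis $\langle e^v \otimes e_i, \rho_S(x)\rangle = \langle v_i, x\rangle$, since $\langle S_v(e_i),x\rangle = \langle e^v \otimes e_i, \rho_S(x)\rangle = \langle v_i,x\rangle$ for all $x$, and one should note $v_i$ is primitive because $S_v$ is a $\ast$-morphism sending the primitive $e_i$ into a primitive element (or argue directly from the cointeraction compatibility). For property 2, $S_v \circ S_u = S_{S_v(u)}$, I would run the computation
\begin{align*}
\langle S_v(S_u(\chi)),x\rangle
&= \langle e^v \otimes S_u(\chi), \rho_S(x)\rangle \\
&= \langle e^v \otimes e^u \otimes \chi, (Id \otimes \rho_S)\rho_S(x)\rangle \\
&= \langle e^v \otimes e^u \otimes \chi, (\rho_S \otimes Id)\rho_S(x)\rangle \\
&= \langle e^{S_v(u)} \otimes \chi, \rho_S(x)\rangle \\
&= \langle S_{S_v(u)}(\chi),x\rangle,
\end{align*}
where the middle step is precisely where coassociativity of $\rho_S$ is used, and the penultimate step uses that $\rho_S$ applied to the first leg $e^v$ — more precisely, that $e^v$ composed with $\rho_S$ reproduces $e^{S_v(u)}$ in the appropriate slot — which is the dual reformulation of $S_v(u) = \{S_v(u_1),\dots,S_v(u_n)\}$ via the multiplicativity of $\rho_S$ over $\centerdot$ and the definition of $e^{(\cdot)}$ as $\exp^\centerdot$ of the generating sum.

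The main obstacle I expect is precisely justifying that penultimate identification $\langle e^v \otimes e^u \otimes \chi, (\rho_S \otimes Id)\rho_S(x)\rangle = \langle e^{S_v(u)}\otimes\chi,\rho_S(x)\rangle$ cleanly: one must check that applying $\rho_S$ to $e^u$ in the first two legs and then evaluating the first leg against $e^v$ really produces $\exp^\centerdot(\sum_i (S_v(u_i),e_i))$, which requires unwinding the extension of $\rho_S$ to $\mathcal{H}_{in}\times\mathcal{B}_1$ acting only on the first component, its multiplicativity with respect to $\centerdot$, and the fact that $\exp^\centerdot$ intertwines the relevant structures — i.e. that $\rho_S$ of a grouplike element is built from $\rho_S$ of the primitive generators. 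This is bookkeeping rather than a deep difficulty, but it is the one spot where the argument is not a line-by-line transcription of the translation case; everything else is routine once the earlier results and the translation-side computations are in hand, and indeed one may simply invoke that $S_v = T_{v'}$ with $v' = \{v_1 - e_1, \dots, v_n - e_n\}$ together with the correspondence already established to reduce most of the work to the translation theorem.
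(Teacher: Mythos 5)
Your proposal is correct and matches the paper's intent exactly: the paper's own proof is the one-line remark that the claim is ``straightforward to check in the same way it was done for translations,'' i.e.\ precisely the property-by-property verification you carry out, with coassociativity of $\rho_S$ replacing the shifted identity in the check of $S_v \circ S_u = S_{S_v(u)}$. Your spelled-out justification of the step $\langle e^v \otimes e^u \otimes \chi, (\rho_S \otimes Id)\rho_S(x)\rangle = \langle e^{S_v(u)}\otimes\chi,\rho_S(x)\rangle$ is sound bookkeeping that the paper leaves implicit, so no gap remains.
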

\begin{proof}
This is straightforward to check in the same way it was done for translations.
\end{proof}

We now relate the coactions $\rho_S$ and $\rho_T$. The following proposition states that if one knows $\rho_S$, one can obtain $\rho_T$ by replacing every occurrence of an element $(e_i,e_i)$ in the left tensor by $1+(e_i,e_i)$. This can be understood intuitively: an $e_i$ in a translation $T_{v}(e_i)=e_i+v_i$ can result either from the identity part of the translation or from the $v_i$, while an $e_i$ in a substitution $S_v(e_i)=v_i$ can only follow from the $v_i$.

\begin{proposition} \label{prop::TranslationSubstitutionDualRelation}
Let $S$ be a substitution and let $T$ be the translation induced by $T_{v'}=S_v$ for $v'=\{v_1-e_1,\dots,v_n-e_n\}$. Define the linear map $\phi: S(\mathcal{H}_{in}\times \mathcal{B}_1) \to S(\mathcal{H}_{in}\times \mathcal{B}_1)$ by
\allowdisplaybreaks 
\begin{align*}
	\phi((x,e_i))=	\begin{cases*}
				1+(e_i,e_i), \quad x=e_i \\
				1, \quad \text{otherwise}
				\end{cases*}
\end{align*}
and
\allowdisplaybreaks
\begin{align*}
	\phi(x \centerdot y)=\phi(x)\centerdot \phi(y).
\end{align*}
Then
\allowdisplaybreaks
\begin{align*}
\rho_T=(\phi \otimes Id)\rho_S.
\end{align*}
\end{proposition}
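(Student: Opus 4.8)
The plan is to verify the identity $\rho_T = (\phi \otimes Id)\rho_S$ by testing it against characters, exactly as was done throughout Section \ref{section::Translations}. Since $\rho_T$ and $\rho_S$ both map into $S(\mathcal{H}_{in}\times\mathcal{B}_1)\otimes\mathcal{H}$ and the pairing with $e^v\otimes\chi$ determines an element of $S(\mathcal{H}_{in}\times\mathcal{B}_1)\otimes\mathcal{H}$ (using that the $e^v$ range over all characters of $S(\mathcal{H}_{in}\times\mathcal{B}_1)$ and $\chi$ over all characters of $\mathcal{H}$, together with Lemma \ref{Lemma:characterevaluation}), it suffices to show that
\begin{align*}
\langle e^v\otimes\chi,\rho_T(x)\rangle = \langle e^v\otimes\chi,(\phi\otimes Id)\rho_S(x)\rangle
\end{align*}
for every character $\chi$ of $\mathcal{H}$ and every choice of primitive data $v=\{v_1,\dots,v_n\}$. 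The left side equals $\langle T_{v'}(\chi),x\rangle$ by Definition \ref{def::Translation}, and by hypothesis $T_{v'}=S_v$, so the left side equals $\langle S_v(\chi),x\rangle = \langle e^v\otimes\chi,\rho_S(x)\rangle$. Thus the claim reduces to the purely algebraic statement
\begin{align*}
\langle e^v\otimes\chi,\rho_S(x)\rangle = \langle e^v\otimes\chi,(\phi\otimes Id)\rho_S(x)\rangle,
\end{align*}
i.e.\ to showing that $\phi^{*}(e^v) = e^v$ when $e^v$ is built from the data $v'=\{v_1-e_1,\dots,v_n-e_n\}$ on the left and from $v=\{v_1,\dots,v_n\}$ on the right — more precisely, that the $\phi$ applied to the first tensor factor converts the substitution-style exponential character into the translation-style one.

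The heart of the matter is therefore a computation internal to the Hopf algebra $(S(\mathcal{H}_{in}\times\mathcal{B}_1),\centerdot,\Delta_{\centerdot})$: I must show that the dual (transpose) of $\phi$ sends the character $e^{v'}$ associated with the parameters $\{v_1-e_1,\dots,v_n-e_n\}$ to the character $e^{v}$ associated with $\{v_1,\dots,v_n\}$. Using that $\phi$ is a $\centerdot$-algebra morphism, its transpose $\phi^{*}$ is compatible with the $\centerdot$-multiplication of characters, so by the identity $\exp^{\centerdot}(a+b)=\exp^{\centerdot}(a)\centerdot\exp^{\centerdot}(b)$ it is enough to understand $\phi^{*}$ on the primitive generators. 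Concretely, for a pair $(w,e_i)\in\mathcal{H}_{in}\times\mathcal{B}_1$ one computes $\langle e^{v'}, \phi((w,e_i))\rangle$ from the definition of $\phi$: it is $\langle e^{v'},1\rangle + \langle e^{v'},(e_i,e_i)\rangle$ when $w=e_i$ and $\langle e^{v'},1\rangle$ otherwise. Reading off the linear-in-generators part — which is what pairs nontrivially with the primitive $\sum_i (v_i,e_i)$ in the exponent of $e^v$ — one sees that the generator $(e_i,e_i)$ acquires an extra $+1$ in its coefficient, which is exactly the passage from $v_i' = v_i - e_i$ to $v_i = v_i' + e_i$. Assembling these pieces via $\exp^{\centerdot}$ gives $\phi^{*}(e^{v'}) = e^{v}$, and plugging back yields $\langle e^v\otimes\chi,(\phi\otimes Id)\rho_S(x)\rangle = \langle e^{v'}\otimes\chi,\rho_S(x)\rangle = \langle T_{v'}(\chi),x\rangle = \langle e^{v'}\otimes\chi,\rho_T(x)\rangle$, wait — I should present this the other way: start from $\rho_T$ characterized by $T_{v'}$, rewrite $T_{v'}=S_v$, and then recognize $e^{v}$ as $\phi^{*}(e^{v'})$.

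The step I expect to be the main obstacle is the bookkeeping in the previous paragraph: making precise the identification of "the coefficient of the generator $(e_i,e_i)$ in the exponent" with the shift $v_i \mapsto v_i + e_i$, and checking that $\phi$'s behavior on the non-generator monomials (where it collapses most things to $1$) does not spuriously affect higher-degree pairings. The cleanest way to handle this rigorously is to observe that $\phi^{*}$, being an algebra morphism for $\centerdot$ on the dual side and continuous, is determined by its effect on the group of characters, and that effect is: replace each factor $\exp^{\centerdot}((v_i-e_i,e_i))$ appearing in $e^{v'}$ — after expanding $v_i' = v_i - e_i$ by linearity in the first slot, so $(v_i-e_i,e_i) = (v_i,e_i) - (e_i,e_i)$ — by $\exp^{\centerdot}((v_i,e_i))$, because $\phi$ "absorbs" precisely the $-(e_i,e_i)$ correction via the substitution $(e_i,e_i)\mapsto 1+(e_i,e_i)$ on the predual. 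Once this is spelled out, the identity $\rho_T=(\phi\otimes Id)\rho_S$ follows, and I would close with a remark that this matches the intuitive explanation given before the proposition: an $e_i$ in $T_{v'}(e_i)=e_i+v_i'$ can come either from the identity part or from $v_i'$, which is exactly the bifurcation encoded by $(e_i,e_i)\mapsto 1+(e_i,e_i)$.
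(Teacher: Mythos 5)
Your overall strategy coincides with the paper's: pair both sides with $e^{v'}\otimes\chi$, use $T_{v'}=S_v$ to reduce the proposition to the single identity $\phi^{\ast}(e^{v'})=e^{v}$ in the dual of $S(\mathcal{H}_{in}\times\mathcal{B}_1)$, and verify that identity on the generators $\mathcal{H}_{in}\times\mathcal{B}_1$. (The paper states the same identity as $e^{v}=e^{v'}\centerdot e^{\epsilon}$ with $\epsilon=\sum_i(e_i,e_i)$ and moves $e^{\epsilon}$ through $\Delta_{\centerdot}$, which amounts to recognizing $\phi=(Id\otimes\langle e^{\epsilon},\cdot\rangle)\Delta_{\centerdot}$.) However, one step of your argument is false as stated: you justify the reduction to generators by asserting that, since $\phi$ is a $\centerdot$-algebra morphism, its transpose $\phi^{\ast}$ is compatible with the $\centerdot$-multiplication of characters (later, that it is "an algebra morphism for $\centerdot$ on the dual side"). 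The transpose of a $\centerdot$-morphism is a morphism for the coproduct dual to $\centerdot$, not for the product dual to $\Delta_{\centerdot}$, and here multiplicativity genuinely fails: with a single generator $g=(e_1,e_1)$ and $\alpha_a$ the character with $\alpha_a(g)=a$, one has $\phi^{\ast}(\alpha_a)=\alpha_{a+1}$, so $\phi^{\ast}(\alpha_a\centerdot\alpha_b)=\alpha_{a+b+1}\neq\alpha_{a+b+2}=\phi^{\ast}(\alpha_a)\centerdot\phi^{\ast}(\alpha_b)$. What is true, and suffices for your reduction, is that $\phi^{\ast}$ maps characters to characters (composition of $\centerdot$-morphisms) and that a character of the free commutative algebra $S(\mathcal{H}_{in}\times\mathcal{B}_1)$ is determined by its values on generators; alternatively, argue as the paper does and check directly that $\langle e^{v'},\phi(z)\rangle=\langle e^{v'}\otimes e^{\epsilon},\Delta_{\centerdot}(z)\rangle=\langle e^{v},z\rangle$ for all $z$.

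A second point you gloss over: in your evaluation on generators you use the clause $\phi((w,e_i))=1$ for $w\neq e_i$ literally, which gives $\langle\phi^{\ast}(e^{v'}),(w,e_i)\rangle=1$, whereas $\langle e^{v},(w,e_i)\rangle$ is the coefficient of $w$ in $v_i$; under that literal reading the identity $\phi^{\ast}(e^{v'})=e^{v}$, and hence the proposition, would fail. The example following the proposition (where the two-vertex tree on the left leg is left untouched and only each $\bullet$ is replaced by $1+\bullet$) shows the intended $\phi$ fixes the generators $(w,e_i)$ with $w\neq e_i$; with that reading your generator computation does close up, since then $\langle\phi^{\ast}(e^{v'}),(w,e_i)\rangle$ is the coefficient of $w$ in $v_i-e_i$ for $w\neq e_i$ and $1$ plus the coefficient of $e_i$ in $v_i-e_i$ for $w=e_i$, which are exactly the values of $e^{v}$ — precisely the content of the paper's $e^{v}=e^{v'}\centerdot e^{\epsilon}$.
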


\begin{proof}
By the assumption $T_{v'}=S_v$, we get
\allowdisplaybreaks
\begin{align*}
	\langle e^{v'}\otimes \chi,\rho_T(x)\rangle = \langle e^v \otimes \chi, \rho_S(x)\rangle.
\end{align*}
Denote
\allowdisplaybreaks
\begin{align*}
\epsilon=\sum_i (e_i,e_i)
\end{align*}
and note that
\allowdisplaybreaks
\begin{align*}
	e^{v} =&e^{v'+\epsilon}\\
		=&e^{v'}\centerdot e^{\epsilon}.
\end{align*}
Hence
\allowdisplaybreaks
\begin{align*}
	\langle e^{v'}\otimes \chi, \rho_T(x)\rangle 
	=& \langle e^{v'}\otimes e^{\epsilon} \otimes \chi, (\Delta_{\centerdot} \otimes Id)\rho_S(x)\rangle
\end{align*}
and the proposition follows.
\end{proof}

\begin{example}
Let $\mathcal{H}_{BCK}$ be the Hopf algebra of undecorated rooted trees corresponding to branched rough paths and let $T_v$ be the translation of $\mathcal{H}_{BCK}$ described in \cite{BrunedChevyrevFrizPreiss2017}. Then the coaction $\rho_S$ for the corresponding substitution map $S_v$ agrees with $\Delta_{CEFM}$ when restricted to trees. Then $\bullet$ is the unique element in $\mathcal{B}_1$ and:
\begin{align*}
\rho_S(\Forest{[[[]][]]})=&\bullet \bullet \bullet \bullet \otimes \Forest{[[[]][]]} + \Forest{[[]]}\bullet \bullet \otimes (2\Forest{[[][]]} + \Forest{[[[]]]} )+(\Forest{[[[]]]}\bullet +\Forest{[[][]]}\bullet + \Forest{[[]]}\Forest{[[]]}) \otimes \Forest{[[]]} +\Forest{[[[]][]]}\otimes \bullet , \\
\rho_T(\Forest{[[[]][]]})=&(\phi \otimes Id)\rho_S(\Forest{[[[]][]]}) \\
=& (\phi \otimes Id)(\bullet \bullet \bullet \bullet \otimes \Forest{[[[]][]]} + \Forest{[[]]}\bullet \bullet \otimes (2\Forest{[[][]]} + \Forest{[[[]]]} )+(\Forest{[[[]]]}\bullet +\Forest{[[][]]}\bullet + \Forest{[[]]}\Forest{[[]]}) \otimes \Forest{[[]]} +\Forest{[[[]][]]}\otimes \bullet    ) \\
=&(1+\bullet)^4 \otimes \Forest{[[[]][]]} + \Forest{[[]]}(1+\bullet)^2 \otimes (2\Forest{[[][]]} + \Forest{[[[]]]} )+(\Forest{[[[]]]}(1+\bullet) +\Forest{[[][]]}(1+\bullet) + \Forest{[[]]}\Forest{[[]]}) \otimes \Forest{[[]]} +\Forest{[[[]][]]}\otimes \bullet \\
=&1 \otimes \Forest{[[[]][]]} + \bullet \otimes \Forest{[[[]][]]} + \bullet \bullet \otimes \Forest{[[[]][]]} + \bullet \bullet \bullet \otimes \Forest{[[[]][]]} + \bullet \bullet \bullet \bullet \otimes \Forest{[[[]][]]} \\\
+&\Forest{[[]]} \otimes (2\Forest{[[][]]} + \Forest{[[[]]]} ) + \Forest{[[]]}\bullet \otimes (2\Forest{[[][]]} + \Forest{[[[]]]} ) + \Forest{[[]]}\bullet \bullet \otimes (2\Forest{[[][]]} + \Forest{[[[]]]} ) \\
+&\Forest{[[[]]]} \otimes \Forest{[[]]} + \Forest{[[[]]]}\bullet \otimes \Forest{[[]]} + \Forest{[[][]]} \otimes \Forest{[[]]} + \Forest{[[][]]}\bullet \otimes \Forest{[[]]} + \Forest{[[]]}\Forest{[[]]} \otimes \Forest{[[]]} + \Forest{[[[]][]]}\otimes \bullet.
\end{align*}
We see that, to go from $\rho_S$ to $\rho_T$, we have to identify every occurence of $\bullet$ on the left side of the tensor. Then we split the terms with $\bullet$ into a sum of either keeping the $\bullet$ on the left side, or replacing it with the unit for the multiplication.
\end{example}

\section{Substitutions from products} 
\label{section::Algebraic}

In \cite{BrunedChevyrevFrizPreiss2017}, the authors construct translations on the Butcher--Connes--Kreimer Hopf algebra $\mathcal{H}_{BCK}$ by considering a pre-Lie product on the primitive elements of the dual algebra. Noting that all primitive elements could be freely generated from $\mathcal{B}_1$ by the pre-Lie product, they define
\begin{align*}
	T_v(e_i)=e_i+v_i
\end{align*}
and then extend this map to a pre-Lie algebra morphism, as well as a morphism for the convolution product. We would like to capture this idea in the notion of \textit{subtitutions from products}.\\

Let $\mathcal{H}$ be a combinatorial Hopf algebra and suppose that we want to define a substitution map $S_v: \overline{\mathcal{H}^{\ast}} \to \overline{\mathcal{H}^{\ast}}$. If $S_v$ is defined on the primitive elements, then the property of being a morphism for the convolution product will uniquely determine $S_v$ on the whole space. Furthermore, $S_v$ must be a morphism for the Lie bracket on the primitives given by anti-symmetrisation of the convolution product. The problem of defining a substitution map for a given Hopf algebra then reduces to, given the values $S_v(e_i)=v_i$, extending the map $S_v$ to all primitive elements such that the extension is a Lie morphism.\\

In the case of geometric rough paths, the primitive elements are exactly the Lie polynomials generated by $\mathcal{B}_1$. Hence the assumption of $S_v$ being a convolution morphism, and therefore a Lie morphism, uniquely gives an extension to all primitives.\\

For branched rough paths, one can see by counting dimensions that being a Lie morphism is not sufficient to generate all primitive elements. If there are $n$ colours in $\mathcal{H}^{\mathcal{C}}_{BCK}$ then there are $\frac{n(n-1)}{2}$ linearly independent ways to combine degree one elements into degree two elements using Lie brackets, which is less than the $n^2$ different trees of degree two. The pre-Lie product is a suitable choice to generate the remaining primitive elements because it can be obtained by projecting the convolution product onto the primitives. This does in particular mean that the Lie bracket obtained by antisymmetrisation of the pre-Lie product coincides with the Lie bracket from the convolution product, so that a pre-Lie morphism is automatically also a Lie morphism. As a non-example we could generate all primitive elements using the Butcher product, which is given by grafting on only the root. But being a morphism for the Butcher product contradicts being a morphism for the Lie bracket, and hence can't give a substitution map.

\begin{definition}
Let $S_v : \overline{\mathcal{H}^{\ast}} \to \overline{\mathcal{H}^{\ast}}$ be a substitution of $\mathcal{H}$-rough paths. Suppose that there are $k$ products $\diamond_i: (\mathcal{H}_{in})^{\ast} \otimes (\mathcal{H}_{in})^{\ast} \to (\mathcal{H}_{in})^{\ast}$, $i=1,\dots,k$, such that $(\mathcal{H}_{in})^{\ast}$ is generated by $(\mathcal{B}_1)^{\ast}$ via these products. If $S_v(x \diamond_i y)=S_v(x)\diamond_i S_v(y)$, we say that $S_v$ is a $\diamond_i$-substitution. The algebra $((\mathcal{H}_{in})^{\ast},[\cdot,\cdot]_{\ast},\diamond_1,\dots,\diamond_k  )$ is called internally free\footnote{The name internally free was proposed in \cite{Preiss2021}} if $S_v(e_i)=v_i$ extends in a well-defined way for every $v$.
\end{definition}

\begin{theorem} \label{Thm::AlgebraicSubstitution}
Let $(\mathcal{H},\odot,\Delta)$ be a non-degenerate combinatorial Hopf algebra with basis $\mathcal{B}$. Suppose that $((\mathcal{H}_{in})^{\ast},\diamond_1,\dots,\diamond_k,[\cdot,\cdot]_{\ast})$ is internally free, generated by $\mathcal{B}_1$. Then the continuous map $S_v: \overline{\mathcal{H}^{\ast}} \to \overline{\mathcal{H}^{\ast}}$ defined by
\allowdisplaybreaks
\begin{alignat*}{2}
	S_v(e_i)=&v_i, &\qquad e_i \in& (\mathcal{B}_1)^{\ast}, \\
	S_v(a \diamond_i b)=& S_v(a) \diamond_i S_v(b) & a,b \in& (\mathcal{H}_{in})^{\ast}, \\
	S_v(x \ast y)=& S_v(x)\ast S_v(y) & x,y \in & \mathcal{H}^{\ast},
\end{alignat*}
is a substitution (and hence a translation).
\end{theorem}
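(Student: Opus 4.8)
The plan is to verify the four defining conditions of a substitution (Definition in Section~\ref{section::Substitution}) for the map $S_v$ constructed from the products $\diamond_1,\dots,\diamond_k$. First I would argue that $S_v$ is well-defined: by the internal freeness hypothesis, the values $S_v(e_i)=v_i$ extend consistently to all of $(\mathcal{H}_{in})^{\ast}$ as a simultaneous morphism for the brackets $[\cdot,\cdot]_{\ast}$ and the products $\diamond_i$, and then the requirement of being a $\ast$-morphism determines $S_v$ uniquely on all of $\overline{\mathcal{H}^{\ast}}$, since $\overline{\mathcal{H}^{\ast}}$ is generated as an algebra (under $\ast$, together with completion) by its primitive elements $(\mathcal{H}_{in})^{\ast}$ — this is the standard fact for the completed dual of a connected graded Hopf algebra. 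Condition~$1$ ($S_v(e_i)=v_i$) holds by construction, and condition~$3$(b) ($S_v$ is a $\ast$-morphism) is part of the construction.

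Next I would check that $S_v$ maps characters to characters (condition~$3$(a)). Since $(\overline{\mathcal{H}^{\ast}},\ast,\Delta_{\odot})$ is a (completed) Hopf-type algebra in which characters are exactly the grouplike elements and are the image of primitive elements under the $\ast$-exponential, and since $S_v$ is a $\ast$-morphism sending primitives $(\mathcal{H}_{in})^{\ast}$ into primitives (because it is a Lie morphism for $[\cdot,\cdot]_{\ast}$, the $\diamond_i$ being auxiliary products whose antisymmetrizations agree with $[\cdot,\cdot]_{\ast}$ on the generators, as emphasized in the discussion preceding the theorem), it follows that $S_v$ commutes with $\exp_{\ast}$ and hence sends grouplike elements to grouplike elements. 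The analytic bound in condition~$3$(c) follows exactly as in the corresponding verification for translations earlier in the excerpt: one writes $\langle S_v(\mathbb{X}_{st}),x\rangle$ in terms of the (dual) coaction applied to $x$ and the estimate of Theorem~\ref{Thm::estimate}, using that the "substitution coefficients" are independent of $|t-s|$. More care is needed here than in the translation case because the $v_i$ need not have degree one; but since $\mathcal{H}$ is graded and combinatorial and each $v_i$ is a fixed primitive element, $S_v(x)$ is a locally finite combination of basis elements whose degrees are controlled, so the $\gamma$-Hölder bound is preserved up to changing the constant $c$.

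For the composition law $S_v\circ S_u = S_{S_v(u)}$ (condition~$2$): both sides are $\ast$-morphisms and $\diamond_i$-morphisms (a composition of such morphisms is again one), and on the generators $e_i$ one computes $S_v(S_u(e_i))=S_v(u_i)=(S_v(u))_i$, which is the defining value of $S_{S_v(u)}$ on $e_i$. By the uniqueness part of internal freeness together with the uniqueness of the $\ast$-morphism extension, the two morphisms agree everywhere. Finally, condition~$4$ (existence of the dual coaction $\rho_S$): this is the formal dualization — for each $x\in\mathcal{H}$, the assignment $v\mapsto \langle S_v(\chi),x\rangle$ is, for fixed character $\chi$, a polynomial in the "coordinates" of $v$ with coefficients that are characters evaluated on elements of $\mathcal{H}$, and collecting these coefficients defines $\rho_S(x)\in S(\mathcal{H}_{in}\times\mathcal{B}_1)\otimes\mathcal{H}$ with $\langle S_v(\chi),x\rangle=\langle e^v\otimes\chi,\rho_S(x)\rangle$; that $\rho_S$ is a coaction then follows automatically since $S_v$ is a $\ast$-morphism sending characters to characters, exactly as in Proposition~\ref{Thm::Cointeraction}. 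Having checked all four conditions, $S_v$ is a substitution, and by the equivalence of substitutions and translations established at the start of Section~\ref{section::Substitution} it is (up to the shift $v_i\mapsto v_i-e_i$) also a translation.

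The main obstacle I anticipate is the well-definedness/uniqueness argument underlying condition~$1$ and condition~$2$: one must be careful that "internally free" really does deliver a \emph{simultaneous} morphism for all the $\diamond_i$ and $[\cdot,\cdot]_{\ast}$, that this morphism of $(\mathcal{H}_{in})^{\ast}$ extends to a morphism of the convolution algebra $\overline{\mathcal{H}^{\ast}}$ without ambiguity (using that $(\mathcal{H}_{in})^{\ast}$ topologically generates $\overline{\mathcal{H}^{\ast}}$ under $\ast$ and that the $\ast$-product of primitives decomposes predictably by grading), and that the resulting $S_v$ does not secretly depend on a choice of generating procedure. Once this is pinned down, the remaining verifications are essentially the same computations already carried out in Sections~\ref{section::Translations} and~\ref{section::Substitution}.
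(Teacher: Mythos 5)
Your argument follows the paper's proof of Theorem \ref{Thm::AlgebraicSubstitution} essentially step for step: well-definedness is exactly what the internal-freeness hypothesis supplies; the composition law $S_v\circ S_u=S_{S_v(u)}$ is obtained, as in the paper, by noting that both sides are $\diamond_i$- and $\ast$-morphisms that agree on $(\mathcal{B}_1)^{\ast}$, hence on $(\mathcal{H}_{in})^{\ast}$, hence on $\mathcal{H}^{\ast}$ and, by continuity, on $\overline{\mathcal{H}^{\ast}}$; characters map to characters because $S_v$ is a continuous $\ast$-morphism preserving primitives; and the analytic bound is justified by the same degree-control observation the paper uses ($|S_v(x)|\leq N|x|$ with $N=\max_i |v_i|$), at the same level of brevity. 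The only genuine divergence is condition 4: the paper's proof of the theorem does not verify the existence of $\rho_S$ inside the proof at all, but instead constructs the coaction explicitly afterwards in Section \ref{section::Algebraic} via the coloured operad $P$ and the module $R$, proving the pairing identity $\langle S_v(x),y\rangle=\langle e^v\otimes x,\rho_S(y)\rangle$ in a separate proposition, whereas you sketch an abstract ``collect the coefficients'' dualization. That route can be made rigorous, but as written it is only a sketch: one needs the grading and finite-dimensionality of homogeneous components to see that the dependence of $\langle S_v(\chi),x\rangle$ on $v$ factors through finitely many pairings $\langle e^v,w\rangle$ with $w\in S(\mathcal{H}_{in}\times\mathcal{B}_1)$, and a separation argument in the spirit of Lemma \ref{Lemma:characterevaluation} to pin down $\rho_S(x)$ uniquely; the paper's explicit operadic formula buys concreteness (and is what is later specialized to planarly branched rough paths), while your abstract argument buys independence from any particular combinatorial description. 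On the core verification the two proofs coincide.
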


\begin{proof}
We check all of the conditions:
\begin{enumerate}
\item $S_v(e_i)=v_i$ is by definition verified.
\item $S_v \circ S_u$ and $S_{S_v(u)}$ are both $\diamond_i$ morphisms, for $i=1,\dots,k$, that agree on $(\mathcal{B}_1)^{\ast}$, hence they agree on $(\mathcal{H}_{in})^{\ast}$. They are furthermore $\ast$-morphisms that agree on $(\mathcal{H}_{in})^{\ast}$ and therefore agree on $\mathcal{H}^{\ast}$. Finally they agree on $\overline{\mathcal{H}^{\ast}}$ by continuity.
\item $S_v$ maps characters to characters as it is a continuous algebra morphism that maps primitive elements to primitive elements.
\item $S_v(x\ast y)=S_v(x)\ast S_v(y)$ is by definition.
\item The bound can be seen from that $|S_v(x)| \leq N |x|$, where $N=\max \{|v_1|,\dots,|v_n| \}$, and $x \in \mathcal{H}^{\ast}$.
\end{enumerate}
\end{proof}

We are now interested in describing the coaction $\rho_S$. It turns out that the coaction can always be described using coloured operads. The construction used here is based on Foissy \cite{Foissy2017}, where coproducts are deduced from operads. This was adapted in \cite{Rahm2022} to construct coactions.\\ 

Suppose that $((\mathcal{H}_{in})^{\ast},\diamond_1,\dots,\diamond_k,[\cdot,\cdot]_{\ast})$ is internally free, then one can construct a coloured operad $P=\oplus_{m=1}^{\infty} P(m)$. Every element in $(\mathcal{H}_{in})^{\ast}$ can be expressed as a polynomial in elements from $(\mathcal{B}_1)^{\ast}$ by using the products $[\cdot,\cdot]_{\ast},\diamond_1,\dots,\diamond_k$. An element in $P(m)$ is a pair $(x,e_i)$ where $e_i \in \mathcal{B}_1$ and $x$ is a homogeneous element of degree $m$ in $(\mathcal{H}_{in})^{\ast}$, together with a bijection between the set $\{1,\dots,m \}$ and the degree $1$ elements in its polynomial representation. Let $y \in P(n)$ and $x_1,\dots,x_n \in P$, then the composition
\begin{align*}
(x_1,\dots,x_n) \circ y
\end{align*}
is defined if the second component of each $x_i$ equals the degree $1$ element labeled by $i$ in $y$. If this is the case, the composition is given by replacing each degree $1$ element in the polynomial representation of $y$ by the first component of their corresponding $x_i$. The labels of the degree $1$ elements in each $x_i$ are shifted by $\sum_{j<i}|x_j|$, so that the result of the composition remains in $P$. This is well-defined because $((\mathcal{H}_{in})^{\ast},\diamond_1,\dots,\diamond_k,[\cdot,\cdot]_{\ast})$ was assumed to be internally free. \\

We can now construct a module $R=\oplus_{m=1}^{\infty}R(m)$ over the operad $P$. Every element in $(\mathcal{H})^{\ast}$ can be expressed as a polynomial in elements from $(\mathcal{B}_1)^{\ast}$ by using the products $\ast,\diamond_1,\dots,\diamond_k$. An element in $R(m)$ is a pair $(x,e_i)$ where $e_i \in \mathcal{B}_1$ and $x$ is a homogeneous element of degree $m$ in $(\mathcal{H})^{\ast}$, together with a bijection between the set $\{1,\dots,m \}$ and the degree $1$ elements in its polynomial representation. Let $y \in R(n)$ and $x_1,\dots, x_n \in P$, then the composition
\allowdisplaybreaks
\begin{align*}
	(x_1,\dots,x_n) \circ y
\end{align*}
is defined if the second component of each $x_i$ equals the degree $1$ element labeled by $i$ in $y$. If this is the case, the composition is given by replacing each degree $1$ element in the polynomial representation of $y$ by the first component of their corresponding $x_i$. The labels of the degree $1$ elements in each $x_i$ are shifted by $\sum_{j<i}|x_j|$, so that the result of the composition remains in $R$. This is well-defined because $((\mathcal{H}_{in})^{\ast},\diamond_1,\dots,\diamond_k,[\cdot,\cdot])$ was assumed to be internally free. \\

Let $\pi_R: R \to \mathcal{H}$ denote the map given by forgetting the labels and the second component. Similarly let $\pi_P : P \to \mathcal{H}_{in} \times \mathcal{B}_1$ be the map given by forgetting the labels. For $x \in R$, let $x_{e_i}$ denote the number of times $e_i\in \mathcal{B}_1$ appears as a factor in $x$ and let
\allowdisplaybreaks
\begin{align*}
	\lambda(x)=\prod_{e_i \in \mathcal{B}_1}x_{e_i}!.
\end{align*}
Then one can construct a coaction
\allowdisplaybreaks
\begin{align*}
	\rho_S : \mathcal{H} \to S(\mathcal{H}_{in}\times \mathcal{B}_1) \otimes \mathcal{H}
\end{align*}
by
\allowdisplaybreaks
\begin{align*}
	\rho_S(x)=\sum_{y_1,\dots,y_n,z} \frac{1}{\lambda(z)\cdot |y_1|! \cdot \ldots \cdot |y_n|!} 
	\langle\pi_R( (y_1,\dots,y_n)\circ z ),x\rangle \pi_P(y_1)\centerdot \dots \centerdot \pi_P(y_n) \otimes \pi_R(z).
\end{align*}
\begin{proposition}
Let $\rho_S$ be as above and let $S_v$ be the algebraic from Theorem \ref{Thm::AlgebraicSubstitution}. Then
\allowdisplaybreaks
\begin{align*}
	\langle S_v(x),y\rangle = \langle e^v \otimes x, \rho_S(y) \rangle,
\end{align*}
for all $y \in \mathcal{H}^{\ast}$ and $x \in \mathcal{H}$.
\end{proposition}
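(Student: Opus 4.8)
The plan is to show that the operadic formula for $\rho_S$ is the one that dualizes the algebraic substitution $S_v$ of Theorem \ref{Thm::AlgebraicSubstitution}. The key observation is that $S_v$ is built by substituting each generator $e_i \in (\mathcal{B}_1)^\ast$ by $v_i$ and then propagating through the products $\diamond_1,\dots,\diamond_k$ and $\ast$; the operad $P$ and its module $R$ exactly encode the possible polynomial representations in these products, so composing $(y_1,\dots,y_n)\circ z$ in $R$ records precisely one way of writing an element of $\mathcal{H}^\ast$ as "outer skeleton $z$ with inner pieces $y_1,\dots,y_n$ plugged in". I would therefore first rewrite $\langle S_v(x),y\rangle$ by expanding $x$ in a polynomial representation and pushing $S_v$ inside: since $S_v$ is a $\diamond_i$- and $\ast$-morphism sending $e_i\mapsto v_i$, the result is the same polynomial expression but with each degree-one leaf $e_i$ replaced by $v_i=\sum_i v_i$. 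Expanding each $v_i$ itself in a polynomial representation, one sees that $\langle S_v(x),y\rangle$ becomes a sum over all ways of choosing an "outer" sub-expression (this is $z$) and "inner" expressions filling its leaves (these are the $y_j$), which is exactly the combinatorial content of $\pi_R((y_1,\dots,y_n)\circ z)$.

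The second step is to match the scalar coefficients. On the right-hand side, $\langle e^v\otimes x,\rho_S(y)\rangle$ pairs $e^v=\exp^\centerdot(\sum_i (v_i,e_i))$ against $\pi_P(y_1)\centerdot\cdots\centerdot\pi_P(y_n)$. Because $e^v$ is a character for $\centerdot$ and $\exp^\centerdot$ produces the symmetric-algebra grouplike element, the pairing $\langle e^v, \pi_P(y_1)\centerdot\cdots\centerdot\pi_P(y_n)\rangle$ contributes a product of coefficients $\langle v_i, \pi_P(y_j)\rangle$ together with a symmetry/multinomial factor counting the orderings of the $y_j$'s and the repetitions among equal $e_i$-colours. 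This is where the normalizing weights $\tfrac{1}{\lambda(z)\cdot |y_1|!\cdots|y_n|!}$ enter: the factors $|y_j|!$ compensate for the choice of labelling (bijection with $\{1,\dots,|y_j|\}$) on each inner piece, and $\lambda(z)=\prod_{e_i}z_{e_i}!$ compensates for permuting leaves of $z$ that carry the same colour, which would otherwise overcount the same substitution. I would check this bookkeeping by comparing, term by term, the coefficient of a fixed basis element $w\in\mathcal{H}$ in $S_v$ with the corresponding sum over triples $(y_1,\dots,y_n,z)$ with $\pi_R((y_1,\dots,y_n)\circ z)=w$ up to labelling.

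The cleanest way to organize this is probably induction on the degree $|x|$ (equivalently on the number of $\diamond$/$\ast$ operations in a polynomial representation), using that both sides are determined on $(\mathcal{B}_1)^\ast$ — where $\rho_S(\text{degree-1 }w)$ reduces to $1\otimes w + (w,\cdot)\otimes(\text{unit})$-type terms and $\langle S_v(e_i),y\rangle=\langle v_i,y\rangle$ by the second hypothesis in the statement — and that both sides are algebra morphisms for the relevant products; this is essentially the same strategy as in \cite{Foissy2017, Rahm2022}, and appealing to that framework lets one avoid reproving well-definedness. The main obstacle I anticipate is precisely the coefficient matching: getting the symmetry factors exactly right requires being careful about (a) the quotient by relabelling implicit in the operad/module composition, (b) the multinomial coefficients coming from $\exp^\centerdot$ and from extending $\rho_S$ multiplicatively, and (c) the fact that distinct pairs $(y_1,\dots,y_n,z)$ can give the same underlying element of $\mathcal H$, so the sums on both sides must be grouped compatibly. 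Once the degree-one base case and the morphism property are in place, the inductive step is forced, and the combinatorial identity $\lambda(z)\cdot\prod_j|y_j|!$ versus the automorphism counts of the composite tree is the one genuine computation to carry out.
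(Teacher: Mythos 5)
Your plan is essentially the paper's own argument: write $x$ as a polynomial in the generators, observe that $S_v(x)$ is exactly the module composition $\pi_R\bigl(((v_1,e_1),\dots,(v_n,e_n))\circ x\bigr)$ with $x_{e_i}$ copies of $(v_i,e_i)$, and note that the normalizing factors $\lambda(z)$ and $|y_j|!$ in the definition of $\rho_S$ precisely cancel the count of admissible labellings of the skeleton and of each inserted piece. The paper carries this out directly, without the induction on degree you suggest, but the key coefficient-matching step is the same.
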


\begin{proof}
Write $x$ as a polynomial in elements from $\mathcal{B}_1$, then $S_v(x)$ is obtained by replacing each $e_i$ by $v_i$. This means that there exists some way to label the factors of $x$ such that
\allowdisplaybreaks
\begin{align*}
	S_v(x) =\pi_R(((v_1,e_1),\dots,(v_1,e_1),(v_2,e_2),\dots,(v_n,e_n),\dots(v_n,e_n))\circ x),
\end{align*}
where the number of $(v_i,e_i)$ occurring in the composition is $x_{e_i}$, for $i=1,\dots,n$. The number of possible ways to do this labelling of $x$ is $\lambda(x)$ and the number of possible ways to label each $v_i$ is $|v_i|!$. This proves the proposition.
\end{proof}

Note that once we have a description of $\rho_S$, we can find a description of $\rho_T$ by applying Proposition \ref{prop::TranslationSubstitutionDualRelation}.

\section{Post-Lie translations in planarly branched rough paths}
\label{section::Examples}

We construct translations in planarly branched rough paths based on section \ref{section::Algebraic}.\\

Let $\mathcal{H}_{MKW}^{\mathcal{C}}$ be the Munthe-Kaas--Wright Hopf algebra. The indecomposable elements in $\mathcal{H}_{MKW}^{\mathcal{C}}$ are the Lie polynomials of trees, meaning all ordered sequences of trees generated by the Lie bracket $[\tau_1,\tau_2]=\tau_1\tau_2-\tau_2\tau_1$ acting on trees and on brackets of trees. This describes the free Lie algebra generated by $\mathcal{PT}_{\mathcal{C}}$. Hence $(\mathcal{H}_{MKW}^{\mathcal{C}})_{in}$ can be endowed with the structure of a free post-Lie algebra $( (\mathcal{H}_{MKW}^{\mathcal{C}})_{in},\graft,[\cdot,\cdot]   )$, where $\graft$ is the planar grafting product. Furthermore endowing $(\mathcal{H}_{MKW}^{\mathcal{C}})_{in}$ with the Lie bracket
\begin{align*}
[\tau_1,\tau_2]_{\ast}=\tau_1 \ast \tau_2 - \tau_2 \ast \tau_1
\end{align*}
does not break internal freeness, as the relation
\begin{align*}
[\tau_1,\tau_2]_{\ast}=\tau_1 \graft \tau_2 - \tau_2 \graft \tau_1 + [\tau_1,\tau_2]
\end{align*}
applies to all $\tau_1,\tau_2 \in (\mathcal{H}_{MKW}^{\mathcal{C}})_{in}$. Hence the construction from Section \ref{section::Algebraic} applies, we define post-Lie translations for planarly branched rough paths as translations generated by the post-Lie products. Note that at least two products are required to define a translation, as the dimensions of homogeneous components of $(\mathcal{H}_{MKW}^{\mathcal{C}})_{in}$ grows too fast to be generated by a single product. Let $T_v$ denote the post-Lie translation in $\mathcal{H}_{MKW}^{\mathcal{C}}$. Then to compute $T_v(\omega)$ for some $\omega$, we need to factorize $\omega$ by $\graft,[\cdot,\cdot],\ast$, e.g.

\begin{align*}
	T_{\{\Forest{[1[2]]},\Forest{[1]}\}}(\Forest{[1[1][2]]}\Forest{[2]})
	=&T_{\{\Forest{[1[2]]},\Forest{[1]}\}}(\Forest{[1[1][2]]}\ast \Forest{[2]} - \Forest{[2[1[1][2]]]} )\\
	=&T_{\{\Forest{[1[2]]},\Forest{[1]}\}}((\bullet_2 \graft (\bullet_1 \graft \bullet_1) -(\bullet_2 \graft \bullet_1)\graft \bullet_1 )\ast \bullet_2 - (\bullet_2 \graft (\bullet_1 \graft \bullet_1) -(\bullet_2 \graft \bullet_1)\graft \bullet_1)\graft \bullet_2 )\\
	=&((\bullet_2 + \bullet_1)\graft ( (\bullet_1+\Forest{[1[2]]}) \graft (\bullet_1+\Forest{[1[2]]}) ) - ( (\bullet_2+\bullet_1)\graft (\bullet_1 + \Forest{[1[2]]}) )\graft (\bullet_1+\Forest{[1[2]]})) \ast (\bullet_2 + \bullet_1) \\
	-& ((\bullet_2 + \bullet_1)\graft ( (\bullet_1+\Forest{[1[2]]}) \graft (\bullet_1+\Forest{[1[2]]}) ) - ( (\bullet_2+\bullet_1)\graft (\bullet_1 + \Forest{[1[2]]}) )\graft (\bullet_1+\Forest{[1[2]]})) \graft (\bullet_2 + \bullet_1).
\end{align*}

Factorizing forests in terms of the Grossman--Larson product is not how we like to think about forests, we rather prefer to think of them as concatenation products of trees. Let $\omega_1 \cdot \omega_2$ denote the noncommutative associative concatenation of the forests $\omega_1,\omega_2$. We will show that post-Lie translations of planarly branched rough paths are also morphisms for the concatenation product, which simplifies computations. As an example, the computation above can be done by factorization in terms of concatenation and $\graft$: 

\begin{align*}
T_{\{\Forest{[1[2]]},\Forest{[1]}\}}(\Forest{[1[1][2]]}\Forest{[2]})
=&T_{\{\Forest{[1[2]]},\Forest{[1]}\}}(\Forest{[1[1][2]]}\cdot \Forest{[2]})\\
=&T_{\{\Forest{[1[2]]},\Forest{[1]}\}}( \bullet_2 \graft (\bullet_1 \graft \bullet_1) -(\bullet_2 \graft \bullet_1)\graft \bullet_1) \cdot T_{\{\Forest{[1[2]]},\Forest{[1]}\}}(\bullet_2) \\
=&( (\bullet_2+\bullet_1) \graft ((\bullet_1+\Forest{[1[2]]}) \graft (\bullet_1+\Forest{[1[2]]})) -((\bullet_2+\bullet_1) \graft (\bullet_1+\Forest{[1[2]]}))\graft (\bullet_1+\Forest{[1[2]]}) )\cdot (\bullet_2 + \bullet_1).
\end{align*}

We now give the proof that post-Lie translations of planarly branched rough paths are concatenation-morphisms.

\begin{lemma} \label{lemma::TranslationsCommuteWithMapsInSums}
Let $(\mathcal{H},\odot,\Delta)$ be a combinatorial Hopf algebra and let $T_v: \overline{\mathcal{H}^{\ast}} \mapsto \overline{\mathcal{H}^{\ast}}$ be a translation in $\mathcal{H}$. Then:
\begin{align*}
\sum_{x \in \mathcal{B}} \langle a,x\rangle T_v(\delta_x)=\sum_{x \in \mathcal{B}}\langle T_v(a),x\rangle \delta_x,
\end{align*}
for all $a \in \overline{\mathcal{H}^{\ast}}$, where $\delta_x \in \mathcal{H}^{\ast}$ is the basis element that is dual to $x$.
\end{lemma}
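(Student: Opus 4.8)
The statement is essentially a tautological reformulation of the fact that $T_v$ is a linear endomorphism of $\overline{\mathcal{H}^\ast}$, written out in coordinates with respect to the basis $\{\delta_x\}_{x \in \mathcal{B}}$. So the plan is to expand both sides in that basis and compare coefficients. First I would write any $a \in \overline{\mathcal{H}^\ast}$ in the form $a = \sum_{x \in \mathcal{B}} \langle a, x\rangle \, \delta_x$, which holds because $\{x\}_{x \in \mathcal{B}}$ and $\{\delta_x\}_{x \in \mathcal{B}}$ are dual bases (and the sum makes sense in the completion). Applying $T_v$, which is linear and continuous, gives $T_v(a) = \sum_{x \in \mathcal{B}} \langle a, x \rangle \, T_v(\delta_x)$, which is exactly the left-hand side.

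Next I would show the left-hand side equals the right-hand side by pairing against an arbitrary basis element $y \in \mathcal{B}$. On the left, $\big\langle \sum_{x} \langle a, x\rangle T_v(\delta_x), \, y \big\rangle = \sum_x \langle a, x\rangle \langle T_v(\delta_x), y\rangle$. On the right, $\big\langle \sum_x \langle T_v(a), x\rangle \delta_x, \, y\big\rangle = \langle T_v(a), y\rangle$. So it suffices to check $\langle T_v(a), y\rangle = \sum_x \langle a, x\rangle \langle T_v(\delta_x), y\rangle$, and this again follows by substituting $a = \sum_x \langle a, x\rangle \delta_x$ into the first slot and using linearity and continuity of $T_v$ together with continuity of the pairing. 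Since this holds for every $y \in \mathcal{B}$, the two elements of $\overline{\mathcal{H}^\ast}$ coincide.

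The only genuine point requiring a word of care — and the one I would flag as the ``main obstacle,'' though it is minor — is the convergence/continuity issue: $a$ lives in the completion $\overline{\mathcal{H}^\ast}$, so the expansion $a = \sum_x \langle a,x\rangle \delta_x$ is an infinite sum, and one must know that $T_v$ and the duality pairing commute with it. This is exactly the continuity of $T_v$ (which is built into the definition of a translation, $T_v$ being a \emph{continuous} algebra morphism) together with the fact that in each fixed degree only finitely many terms contribute, since $\mathcal{H}$ is graded with finite-dimensional components and $T_v$ respects the grading filtration. I would state this observation explicitly and then the coefficient comparison above finishes the proof.
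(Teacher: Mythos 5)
Your proof is correct and is essentially the paper's own argument: both expand $a$ in the dual basis $\{\delta_x\}$, apply $T_v$ termwise using linearity and continuity, and observe that both sides are just the basis expansion of $T_v(a)$. Your extra care about convergence (finitely many contributions against each fixed basis element, guaranteed by the coaction/continuity built into the definition of a translation) is a harmless elaboration of the same idea.
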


\begin{proof}
Seeing $a \in \overline{\mathcal{H}^{\ast}}$ as a (possibly infinite) sum in the dual basis, the left side of the equation is given by the sum of applying $T_v$ to each of the terms. This is however also what the right side of the equation describes.
\end{proof}

\begin{proposition}
Let $T_v$ be the post-Lie translation map for planarly branched rough paths. Then $T_v$ is a morphism for the noncommutative associative concatenation product.
\end{proposition}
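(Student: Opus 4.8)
The plan is to reduce the statement to the formula \eqref{eq::PlanarGL} for the planar Grossman--Larson product together with the two properties of $T_v$ coming from Theorem~\ref{Thm::AlgebraicSubstitution}: that $T_v$ is a morphism for $\ast$ on all of $\overline{\mathcal{H}^{\ast}}$ and a morphism for the planar grafting $\graft$ on $(\mathcal{H}_{MKW}^{\mathcal{C}})_{in}$, and that it maps primitive elements to primitive elements. By linearity and continuity (Lemma~\ref{lemma::TranslationsCommuteWithMapsInSums}) it is enough to prove $T_v(\omega_1\cdot\omega_2)=T_v(\omega_1)\cdot T_v(\omega_2)$ for ordered forests $\omega_1,\omega_2$. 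The observation that makes everything run is that a single planar tree $\rho$, regarded as a word of length one in trees, is primitive for the unshuffle coproduct $\Delta_{\shuffle}$; hence, specialising \eqref{eq::PlanarGL} to a primitive first argument, one obtains
\begin{align}
\label{eq::GLprim}
	p \ast \psi = p \cdot \psi + p \graft \psi
\end{align}
for every primitive $p$ and every $\psi$, where $\graft$ on the right is the grafting extended to forests as in Section~\ref{ssec:trees}. Since $T_v$ preserves primitives, and the primitive elements of $\overline{\mathcal{H}^{\ast}}$ coincide with the completion of $(\mathcal{H}_{MKW}^{\mathcal{C}})_{in}$, which act as derivations for the concatenation product in the free $D$-algebra, both \eqref{eq::GLprim} and the Leibniz rule for $p\graft(-)$ remain available after applying $T_v$ to a tree.

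First I would prove, by a simultaneous induction on the length $m$ of $\psi=\tau_1\cdots\tau_m$, the two claims: (1) $T_v(\rho\cdot\psi)=T_v(\rho)\cdot T_v(\psi)$, and (2) $T_v(\rho\graft\psi)=T_v(\rho)\graft T_v(\psi)$, for every $\rho$ in the span of single trees. For $m=0$ both claims are trivial ($\rho\graft 1=0$, and likewise $T_v(\rho)\graft 1=0$ since derivations annihilate the unit). For $m=1$, claim (2) is precisely the hypothesis that $T_v$ is a $\graft$-morphism on $(\mathcal{H}_{MKW}^{\mathcal{C}})_{in}$, and claim (1) then follows by applying $T_v$ to the identity $\rho\cdot\sigma=\rho\ast\sigma-\rho\graft\sigma$, a rearrangement of \eqref{eq::GLprim}, using the $\ast$-morphism property and claim (2), and then reading \eqref{eq::GLprim} backwards for the primitive $T_v(\rho)$. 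For $m\geq 2$, write $\psi=\sigma\cdot\psi'$ with $\sigma=\tau_1$; since $\rho$ is a derivation, $\rho\graft\psi=(\rho\graft\sigma)\cdot\psi'+\sigma\cdot(\rho\graft\psi')$, and applying $T_v$ and invoking claims (1) and (2) at length $m-1$ on the two summands (together with the $\graft$-morphism property at length one) identifies $T_v(\rho\graft\psi)$ with $T_v(\rho)\graft\bigl(T_v(\sigma)\cdot T_v(\psi')\bigr)=T_v(\rho)\graft T_v(\psi)$, where the last equality uses claim (1) at length $m-1$ to rewrite $T_v(\psi)$ and the derivation property of the primitive $T_v(\rho)$. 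This proves (2) at length $m$, and (1) at length $m$ then follows as in the case $m=1$ from \eqref{eq::GLprim}.

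Once claim (1) is available for all $\psi$, the general statement follows by one more induction, this time on the length of $\omega_1$: writing $\omega_1=\rho\cdot\omega_1'$ with $\rho$ a single tree, one gets $T_v(\omega_1\cdot\omega_2)=T_v\bigl(\rho\cdot(\omega_1'\cdot\omega_2)\bigr)=T_v(\rho)\cdot T_v(\omega_1'\cdot\omega_2)=T_v(\rho)\cdot T_v(\omega_1')\cdot T_v(\omega_2)=T_v(\rho\cdot\omega_1')\cdot T_v(\omega_2)=T_v(\omega_1)\cdot T_v(\omega_2)$, using claim (1) twice and the induction hypothesis once; passing from ordered forests to all of $\overline{\mathcal{H}^{\ast}}$ by linearity and continuity (Lemma~\ref{lemma::TranslationsCommuteWithMapsInSums}) then finishes the proof. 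I expect the main obstacle to be the bookkeeping of the double induction: claim (2) at a given length genuinely needs claim (1) at the smaller lengths, so the two must be carried together rather than in sequence, and one must check carefully that $T_v(\rho)$ — which a priori is an infinite series in the completed dual — is still primitive and hence still a derivation for the concatenation product, as this is exactly what licenses the use of \eqref{eq::GLprim} and of the Leibniz rule after translating.
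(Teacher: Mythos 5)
Your proof is correct, but it takes a genuinely different route from the paper's. The paper argues via the filtration by word length: it defines $\deg(\omega)$ as the maximal length of a word of trees on which $\omega$ is supported, notes from \eqref{eq::PlanarGL} that the planar Grossman--Larson product equals concatenation plus terms of strictly smaller $\deg$, and then uses the $\ast$-morphism property together with Lemma~\ref{lemma::TranslationsCommuteWithMapsInSums} to match the lower-order correction terms before and after translation, so that they cancel and only $T_v(\omega_1)\cdot T_v(\omega_2)$ survives. You instead extract from \eqref{eq::PlanarGL} the exact identity $p\ast\psi=p\cdot\psi+p\graft\psi$ for $p$ primitive (valid since $\Delta_{\shuffle}(p)=p\otimes 1+1\otimes p$ and $1\graft\psi=\psi$) and run a double induction on forest length, using the defining $\graft$-morphism property of the post-Lie translation on $(\mathcal{H}_{MKW}^{\mathcal{C}})_{in}$, preservation of primitives, and the fact that primitives (Lie polynomials of trees, the derivations $\mathcal{D}(A)$ of the free $D$-algebra) satisfy the Leibniz rule for concatenation; you rightly flag that primitivity and the Leibniz rule must persist in the completed dual, where they do extend by linearity and continuity. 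The trade-off: the paper's filtration argument is shorter and induction-free but leans on the triangularity of Grossman--Larson with respect to concatenation and on tracking $\deg$ through $T_v$, whereas your induction is longer but each step is checked directly from the $D$-algebra axioms and the morphism properties of Theorem~\ref{Thm::AlgebraicSubstitution}, and it makes explicit where the post-Lie structure enters, which the paper's proof never invokes directly.
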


\begin{proof}
Define the map $\deg: (\mathcal{H}_{MKW}^{\mathcal{C}})^{\ast}\to \mathbb{N}$ by
\allowdisplaybreaks
\begin{align*}
	\deg(\omega)=\inf \{n: \langle \omega,\tau_1\cdot\ldots\cdot \tau_N\rangle=0, \forall N>n, 
	\; \forall \tau_1,\dots,\tau_N \in \mathcal{T}_{\mathcal{C}} \}.
\end{align*}
Then we can see from Equation \eqref{eq::PlanarGL} that
\allowdisplaybreaks
\begin{align*}
	\omega_1 \ast \omega_2 
	= \omega_1 \cdot \omega_2 + \sum_{\deg(\omega_3)<\deg(\omega_1 \ast \omega_2)} \langle \omega_1 \ast \omega_2,\omega_3\rangle \omega_3,
\end{align*}
when $\omega_1,\omega_2$ are homogeneous with respect to the map $\deg$. Then by Lemma \ref{lemma::TranslationsCommuteWithMapsInSums}:
\allowdisplaybreaks
\begin{align*}
	T_v(\omega_1 \cdot \omega_2)
	=&T_v(\omega_1 \ast \omega_2 -\sum_{\deg(\omega_3)<\deg(\omega_1\ast \omega_2)}\langle \omega_1 \ast \omega_2, \omega_3 \rangle \omega_3  ) \\
	=&T_v(\omega_1) \ast T_v(\omega_2)-\sum_{\deg(\omega_3)<\deg(\omega_1\ast \omega_2)}\langle \omega_1 \ast \omega_2, \omega_3 \rangle T_v(\omega_3) \\
	=&T_v(\omega_1)\ast T_v(\omega_2) - \sum_{\deg(\omega_3)<\deg(T_v(\omega_1 \ast \omega_2))}\langle T_v(\omega_1 \ast \omega_2),\omega_3 \rangle \omega_3 \\
	=&T_v(\omega_1)\ast T_v(\omega_2) - \sum_{\deg(\omega_3)<\deg(T_v(\omega_1 \ast \omega_2))}\langle T_v(\omega_1) \ast T_v(\omega_2),\omega_3 \rangle \omega_3 \\
	=&T_v(\omega_1)\cdot T_v(\omega_2).
\end{align*}
Hence $T_v$ is a concatenation morphism for homogeneous elements. The proposition then follows from linearity and continuity.
\end{proof}

\begin{remark}
In the construction of translations in geometric rough paths, the product on the primitive elements can be obtained by projecting the convolution product. The convolution product in geometric rough paths is concatenation. Concatenating two primitive elements and then projecting the result onto the primitives, is the same as taking half the Lie bracket of the two primitive elements. Similarly in branched rough paths, one can obtain the pre-Lie product on the primitives by first applying the convolution product and then projecting the result onto the primitive elements. There is a corresponding construction for planarly branched rough paths. Consider the Munthe-Kaas--Wright Hopf algebra endowed with a second coproduct $(\mathcal{OF}_{\mathcal{C}},\shuffle,\Delta_{MKW},\Delta_{\cdot})$, the deconcatenation coproduct which is dual to concatenation. The dual Hopf algebra then has two convolution products, planar Grossman--Larson and concatenation. Applying the above construction of first taking the convolution product and then projecting onto the primitive elements, to both of these convolution products, gives us two products on the primitive elements. The product obtained from concatenation is half the Lie bracket, the product obtained from planar Grossman--Larson is the sum of post-Lie grafting and half the Lie bracket. Defining a translation map to be a morphism with respect to these two products is equivalent to a post-Lie translation. Furthermore, by the above theorem, these translations are morphisms for both convolution products.
\end{remark}

We are now interested in describing the coaction $\rho_S$ that is dual to post-Lie substitution. We can then find $\rho_T$ by using Proposition \ref{prop::TranslationSubstitutionDualRelation}. A description of $\rho_S$, in the case of uncoloured trees, was derived in \cite{Rahm2022} by using the construction described in section \ref{section::Algebraic}. Extending the description to trees with coloured vertices is trivial. \\

The coaction $\rho_S: \mathcal{H}_{MKW}^{\mathcal{C}} \to S((\mathcal{H}_{MKW}^{\mathcal{C}})_{in} \times \mathcal{B}_1 ) \otimes \mathcal{H}_{MKW}^{\mathcal{C}}$ is given by contractions of \textit{admissible subforests}.

\begin{definition}
Let $\omega$ be a forest and let $\omega_1 \cdots \omega_n$ be a partition of the vertices of $\omega$ into subforests. This partition is admissible if and only if the following conditions are met:
\begin{enumerate}
\item Each root in the same $\omega_i$ are either roots of $\omega$ or grafted onto the same vertex of $\omega$. Furthermore, the roots of $\omega_i$ are adjacent in the planar embedding of $\omega$.
\item If $e$ is an edge in an $\omega_i$, then every edge $e'$ in $\omega$ that is outgoing from the same vertex as $e$ and is to the right of $e$ in the planar embedding, is also in $\omega_i$.
\end{enumerate}
If $\omega_1 \cdots \omega_n$ is an admissible subforest of $\omega$, let the contraction $\omega / \omega_1 \cdots \omega_n$ deonte the sum of all forests obtained by contracting each $\omega_i$ into a single vertex.
\end{definition}

The cosubstitution coaction $\rho_S$ is now given as a sum over all admissible subforests, tensored with all the corresponding contractions. If an $\omega_i$ has several roots, then Lie brackets has to be inserted in the left tensor.

\begin{example}Let $\mathcal{C}=\{1,\dots,k\}$, then:
\begin{align*}
\rho_S(\Forest{[1[3][2]]})=&\sum_{i=1}^k (\Forest{[1[3][2]]},\Forest{[i]}) \otimes \Forest{[i]}+\sum_{i,j=1}^k ([\Forest{[2]},\Forest{[3]}],\Forest{[i]})\centerdot (\Forest{[1]},\Forest{[j]})\otimes \Forest{[j[i]]}\\
+&\sum_{i,j=1}^k(\Forest{[2]},\Forest{[i]} )\centerdot (\Forest{[1[3]]},\Forest{[j]})\otimes \Forest{[j[i]]}+\sum_{i,j,\ell=1}^k(\Forest{[1]},\Forest{[i]})\centerdot (\Forest{[2]},\Forest{[j]})\centerdot (\Forest{[3]},\Forest{[k]})\otimes \Forest{[i[k][j]]},
\end{align*}
corresponding to the admissible partitions:
\begin{align*}
(\Forest{[1[3][2]]}),(\Forest{[2]}\Forest{[3]},\Forest{[1]}),(\Forest{[2]},\Forest{[1[3]]}),(\Forest{[2]},\Forest{[3]},\Forest{[1]}).
\end{align*}
\end{example}

We conclude by remarking on how post-Lie translations interact with differential equations driven by planarly branched rough paths.

\begin{proposition}
$Y_{st}$ is a solution to the controlled differential equation
\begin{align*}
	dY_{st}= \#f(Y_{st})d(T_v(\mathbb{X}))
\end{align*}
if and only if it is a solution to the controlled differential equation
\begin{align*}
	dY_{st}=\# \mathcal{F}_f(\{\bullet_1+v_1,\dots,\bullet_d+v_d \})(Y_{st})d\mathbb{X}.
\end{align*}
\end{proposition}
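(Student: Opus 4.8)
The plan is to unwind both sides through the formal solution formula of Definition~\ref{def::PlanarRoughSolution} and reduce everything to an identity of linear functionals on $\mathcal{H}_{MKW}^{\mathcal{C}}$, using the cointeraction/coaction machinery from Section~\ref{section::Substitution}. By definition, the solution to $dY_{st}=\#f(Y_{st})d(T_v(\mathbb{X}))$ is $Y_{st}=\#\mathcal{F}_f(\mathbb{Y}^{(1)}_{st})(y)$ with $\mathbb{Y}^{(1)}_{st}=\sum_{\omega}\langle T_v(\mathbb{X}_{st}),\omega\rangle\,\omega$, while the solution to the second equation is $Y_{st}=\#\mathcal{F}_{f'}(\mathbb{Y}^{(2)}_{st})(y)$ with $f'=\{\bullet_1+v_1,\dots,\bullet_d+v_d\}$ and $\mathbb{Y}^{(2)}_{st}=\sum_{\omega}\langle\mathbb{X}_{st},\omega\rangle\,\omega$. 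So it suffices to show $\#\mathcal{F}_f(\mathbb{Y}^{(1)}_{st})=\#\mathcal{F}_{f'}(\mathbb{Y}^{(2)}_{st})$; in fact I would aim for the stronger algebraic identity $\mathcal{F}_f\circ T_v = \mathcal{F}_{f'}$ as maps $\mathcal{OF}_{\mathcal{C}}\to C^{\infty}(\mathcal{M},\mathcal{U}(Lie(G)))$, since $T_v$ acts on the rough path coefficients exactly by its dual action.

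First I would check the identity on the generators $\bullet_i\in\mathcal{B}_1$: here $T_v(\bullet_i)=\bullet_i+v_i$ by property~1 of Definition~\ref{def::Translation} (using that the post-Lie translation \emph{is} a translation, established earlier in this section), so $\mathcal{F}_f(T_v(\bullet_i))=\mathcal{F}_f(\bullet_i)+\mathcal{F}_f(v_i)=f_i+\mathcal{F}_f(v_i)$, and $\mathcal{F}_{f'}(\bullet_i)=f_i+\mathcal{F}_f(v_i)$ provided we interpret $v_i$ as the element of $\overline{(\mathcal{H}_{MKW}^{\mathcal{C}})^{\ast}}$ it is (a Lie polynomial of trees) and $\mathcal{F}_{f'}(\bullet_i)=\mathcal{F}_f(\bullet_i+v_i)$ by the way $f'$ is built from $f$ and the universal property. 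Then I would propagate the identity to all of $\mathcal{OF}_{\mathcal{C}}$ using that both $\mathcal{F}_f\circ T_v$ and $\mathcal{F}_{f'}$ are $D$-algebra morphisms: $\mathcal{F}_{f'}$ is one by the universal property of the free $D$-algebra, and $\mathcal{F}_f\circ T_v$ is one because $\mathcal{F}_f$ is a $D$-algebra morphism and $T_v$, being a post-Lie translation, is simultaneously a morphism for $\graft$ (post-Lie grafting), for the Lie bracket, for the planar Grossman--Larson product $\ast$, and — by the Proposition just proved — for concatenation $\cdot$. Since the free $D$-algebra $\mathcal{OF}_{\mathcal{C}}$ is generated by $\mathcal{B}_1$ under $\graft$ and $\cdot$, and both maps agree on $\mathcal{B}_1$ and respect these operations, they coincide. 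Finally, applying $\#$ and evaluating at $y$ gives the equality of the two controlled solutions, and conversely any solution of one equation is a solution of the other.

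The main obstacle I expect is bookkeeping the \emph{pairing convention}: $v_i$ lives in the completed dual $\overline{(\mathcal{H}_{MKW}^{\mathcal{C}})^{\ast}}$, so $\mathcal{F}_f(v_i)$ must be read as $\sum_{\omega}\langle v_i,\omega\rangle\,\mathcal{F}_f(\omega)$, an a priori infinite sum, and one must argue it converges (or truncates degreewise) so that $\mathcal{F}_f\circ T_v$ is genuinely a well-defined $D$-algebra morphism on $\overline{(\mathcal{H}_{MKW}^{\mathcal{C}})^{\ast}}$ rather than only on the non-completed Hopf algebra; the grading of $\mathcal{H}_{MKW}^{\mathcal{C}}$ and the fact that $T_v$ raises degree (so each fixed-degree component receives only finitely many contributions) handle this, exactly as in the bound in part~3 of the definition. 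A secondary point worth spelling out is that $\#$ is applied after $\mathcal{F}_f$ and $\mathcal{F}_{f'}$ in the same way on both sides, so once the $\mathcal{U}(Lie(G))$-valued identity holds the $T\mathcal{M}$-valued one is immediate; and the "if and only if" is just symmetry of the established equality of right-hand sides together with uniqueness of the formal solution.
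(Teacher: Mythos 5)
Your proposal is correct and follows essentially the same route as the paper, which simply declares the statement an immediate consequence of Lemma \ref{lemma::TranslationsCommuteWithMapsInSums} (your ``$T_v$ acts on the rough path coefficients exactly by its dual action'') together with Definition \ref{def::PlanarRoughSolution}; your identification $\mathcal{F}_f\circ T_v=\mathcal{F}_{f'}$ via agreement on generators and the $D$-algebra/concatenation-morphism properties of $T_v$ is exactly the step the paper leaves implicit. The convergence remark about the completed dual is a sensible extra precaution, not a deviation.
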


\begin{proof}
This is an immediate consequence from Lemma \ref{lemma::TranslationsCommuteWithMapsInSums} and definition \ref{def::PlanarRoughSolution}.
\end{proof}

\bibliographystyle{acm}
\bibliography{RoughReferences}
\end{document}